\theoremstyle{plain}
\newtheorem{theorem}{Theorem}[section]
\newtheorem{lemma}[theorem]{Lemma}
\newtheorem{prop}[theorem]{Proposition}
\theoremstyle{definition}
\newtheorem{definition}[theorem]{Definition}
\theoremstyle{remark}
\newcommand{\mb}[1]{\mathbb{#1}}
\newcommand{\mfr}[1]{\mathfrak{#1}}
\title{On a theorem of S\'ark\"ozy for difference sets and shifted primes}
\author{ruoyi wang}
\address{Mathematical Institute, University of Oxford, Radcliffe Observatory Quarter, Woodstock Road, Oxford OX2 6GG, England}
\email{wangr@maths.ox.ac.uk}
\thanks{The study of the author is supported by a Clarendon Scholarship of the University of Oxford, and a Jason Hu Scholarship of Balliol College.}
\begin{document}

\begin{abstract}
We show that if the difference of two elements of a set $A \subseteq [N]$ is never one less than a prime number, then $|A| = O (N \exp (-c (\log N)^{1/3}))$ for some absolute constant $c>0$.
\end{abstract}

\maketitle

\section{Introduction}
In 1978, S\'ark\"ozy published a series of papers~\cite{Sar78a, Sar78b, Sar78} studying difference sets of sequences of integers. One of his results concerns shifted prime numbers. In this article, we shall prove the following.
\begin{theorem}\label{main result}
Let $N$ be a positive integer and let $[N]$ denote the set $\{1, ..., N\}$. Suppose that the difference of any two elements of a set $A \subseteq [N]$ is never one less than a prime number, then $|A| = O( N \exp(-c (\log N)^{1/3}) )$ where $c>0$ is absolute.
\end{theorem}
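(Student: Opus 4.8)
The plan is to iterate a Fourier-analytic density increment, using Vinogradov's estimates for exponential sums over the primes to control the minor arcs.

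\textbf{Set-up and the main count.} Write $\alpha = |A|/N$. After the usual rescaling I would transfer the problem to $\mathbb{Z}/\tilde N\mathbb{Z}$ for a suitable $\tilde N \asymp N$ and perform a $W$-trick (restricting the primes to a fixed residue class modulo $W=\prod_{p\le w}p$ with $w$ a large absolute constant, so that the relevant singular series is $\asymp 1$). Weighting shifted primes by von Mangoldt, consider
\[
T \;=\; \sum_{a,a'\in A}\Lambda(a-a'+1),
\]
the logarithmically weighted number of pairs in $A$ whose difference is one less than a prime power. The hypothesis kills the prime contributions, so only genuine prime powers $p^k$ with $k\ge 2$ remain and $T=O(N^{3/2})$, negligible against $\alpha^2 N^2$ unless $\alpha$ is already far below the target bound. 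On the Fourier side, $T=\int \widehat{1_A}(\theta)\,\overline{\widehat{1_A}(\theta)}\,\widehat{\Lambda}(-\theta)\,e(\theta)\,d\theta$; the frequency $0$ contributes a main term of order $\alpha^2 N^2$, so the non-principal frequencies must carry comparable mass, forcing $\widehat{1_A}$ to be large off the origin.

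\textbf{Arc dissection, increment, and iteration.} Dissect the circle by Farey fractions. On the minor arcs --- fractions $a/q$ with $q$ in a range of the shape $[\exp(c_1(\log N)^{1/3}),\,N\exp(-c_1(\log N)^{1/3})]$ --- Vinogradov's bound gives $|\widehat{\Lambda}(\theta)|=O\bigl(N\exp(-c(\log N)^{1/3})\bigr)$, so these arcs contribute $O\bigl(\exp(-c(\log N)^{1/3})\,\alpha N^2\bigr)$, smaller than the main term provided $\alpha\gg \exp(-c(\log N)^{1/3})$ --- and otherwise we are already done. Hence the mass concentrates near fractions $a/q$ with $q\le \exp(c_1(\log N)^{1/3})$, and pigeonholing over the denominator, the numerator, and a subinterval on which the low-frequency factor is essentially constant, I would extract an arithmetic progression $P$ (or, if need be, a bounded-rank Bohr set) of common difference $q\le \exp(c_1(\log N)^{1/3})$ on which $A$ has relative density at least $\alpha\bigl(1+c/\log(1/\alpha)\bigr)$, with $|P|\ge N\exp\bigl(-O((\log N)^{1/3})\bigr)$. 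Rescaling $P$ to an interval and repeating: the density gain forces the iteration to stop after $O\bigl((\log(1/\alpha))^2\bigr)$ steps since the density cannot exceed $1$, while the length loss permits only $O\bigl((\log N)^{2/3}\bigr)$ steps before the ambient set is exhausted; since $\log(1/\alpha)$ and the cutoff $\log q$ are of the same order, these two budgets are coupled, and balancing them pins $\log(1/\alpha)$ at $O\bigl((\log N)^{1/3}\bigr)$, which is the theorem.

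\textbf{Main obstacle.} I expect the real difficulty to be the intermediate moduli, those $q$ with $(\log N)^B<q\le \exp(c_1(\log N)^{1/3})$: here Siegel--Walfisz is unavailable, so there is no asymptotic for $\psi(x;q,a)$ and hence no clean local main term, while for $\alpha$ as small as the target bound the corresponding arcs are not negligible either, so they cannot simply be discarded. The approach I would take is that a correlation of $A$ with a structure of intermediate modulus $q$ must itself produce a density increment along a progression of modulus $q$, exactly as the genuine major arcs do, so the whole region $q\le \exp(c_1(\log N)^{1/3})$ can be fed uniformly into the iteration with Vinogradov's estimate merely capping $q$. Carrying this out --- extracting a quantitatively usable density increment across the intermediate range without a prime-counting asymptotic, while keeping both the per-step length loss and the number of iterations under control --- is where essentially all of the work lies.
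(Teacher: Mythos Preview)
Your diagnosis of the main obstacle is accurate, but the proposal stops precisely where the theorem begins: you concede in the final paragraph that handling moduli $q$ beyond the Siegel--Walfisz range ``is where essentially all of the work lies,'' and you offer no mechanism for it. The suggestion that a correlation with a structure of modulus $q$ simply feeds back into the iteration hides the problem rather than solving it. After passing to a subprogression of common difference $d$, the next step of the iteration needs major-arc control of $\sum_{n\le N}\Lambda(dn+1)e(-n\theta)$, and in particular a \emph{lower bound} on the number of primes $\equiv 1 \pmod d$. Once $d$ exceeds any fixed power of $\log N$, Siegel--Walfisz gives nothing, and a possible Siegel zero can genuinely shrink the main term; Vinogradov's minor-arc bound is of no help here.

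The paper resolves this with input from the zeros of Dirichlet $L$-functions that your sketch does not contain. It expands $\psi(x,\chi)$ via the truncated explicit formula, integrates by parts so that the major-arc error is governed by $\sum_{\rho}|x^{\rho-1}|$ rather than by the cruder integral of $|\psi(t,\chi)-t|$, and then splits into two cases. If no exceptional zero is present, the zero-density estimate controls the sum over zeros. If an exceptional zero $\beta_D$ exists, the main term is depressed by a factor $\asymp (1-\beta_D)\log(dT)$, but the Deuring--Heilbronn repulsion forces all \emph{other} zeros far left, so the error is depressed by the same factor and the ratio (error)/(main term) is preserved. This is already the Ruzsa--Sanders framework, and on its own it yields only the exponent $1/4$; the improvement to $1/3$ comes from the sharper major-arc estimate (Proposition~\ref{major arc estimate}), which lets the iteration lemma run under the weaker hypothesis $\log N \gg (\log\alpha^{-1}+\log_3 D)(\log D+\log_2 N)$ instead of $\log N \gg (\log D)^2$. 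Your numerology (``balancing the budgets'') presupposes an increment of this quality without supplying it; with only the tools in your outline one would not reach $1/3$.
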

S\'ark\"ozy~\cite{Sar78} established the first explicit bound of $|A|$ using the Hardy--Littlewood method and an iteration argument. He showed\footnote{Here we use $\log_k$ to denote the $k$-fold logarithm.} that $|A| = O( N \exp(- (2 + o(1)) \log_3 N))$. Subsequently, the bound $|A| = O(N \exp ( - \omega(N) \log_3 N))$, where $\omega(N)$ tends to infinity as $N \to \infty$,\footnote{More precisely, $\omega(N) \sim c\log_5 N$ for some $c > 0$.} was obtained by Lucier~\cite{Luc08} who improved S\'ark\"ozy's argument using the methods of Pintz, Steiger and Szemer\'edi~\cite{PSS88}. By exploiting a dichotomy depending on whether an exceptional zero of Dirichlet $L$-functions occurs or not, Ruzsa and Sanders~\cite{RS08} proved that $|A| = O( N \exp(-c (\log N)^{1/4}) )$, where $c>0$ is an absolute constant. 

Our key ingredient is a new major arc estimate. The underlying motivation of building the estimate is to reduce the size of the error terms, which plays a crucial role in the method of Ruzsa and Sanders. To achieve this, we need to control the contribution of the zeros of $L$-functions and avoid additional errors caused by the length of the major arcs.

The main tool for bounding the error caused by zeros of $L$-functions is the exceptional zero repulsion~\cite[Principle 3, Section 18.1]{IK04}. Roughly speaking, it compensates for the reduced size of the main term when an exceptional zero occurs. In fact, on the way of proving Linnik's theorem, one obtains a better error term in the prime number theorem in arithmetic progressions (see Iwaniec and Kowalski~\cite[Equations (18.87) and (18.89)]{IK04}) than the one used by Ruzsa and Sanders~\cite[Proposition 4.7]{RS08}.

The error terms would be too large for the combinatorial arguments should we integrate the error term of the prime number theorem directly. To deal with this issue, we shall exploit all terms involving zeros with large real parts and small imaginary parts using the classical explicit formula to improve the major arc estimate.

The article is organised as follows. We prove the major arc estimate in Section 4. The details of the set-up and the number-theoretic results used in the proof are given in Section 3. In Section 5, we use our improved major arc estimate to modify the iteration scheme of Ruzsa and Sanders and get our result.

\section{Notation}
Let $f \in \ell^1 (\mb{Z})$. The Fourier transform of $f$ is defined as the function $\widehat{f} : \mb{T} \to \mb{C}$ given by
$$\widehat{f} (\theta) := \sum_{x \in \mb{Z}} f(x) e(-x\theta),$$
where $e(\theta) := e^{2 \pi i \theta}.$ We define the convolution of two functions $f,g \in \ell^1 (\mb{Z})$ by
$$f * g (x) := \sum_{y \in \mb{Z}} f(x-y) g(y).$$

Let $Q$ be a positive parameter which will be clarified in the relevant context. For positive integers $a$ and $q \leq Q$, we define
\begin{equation}\label{major arcs notation}
\mfr{M}_{a,q} := \{ \theta \in \mb{T}: |\theta - a/q| \leq 1/(qQ) \} \;\;\text{and}\;\; \mfr{M}^*_{q} := \bigcup_{\substack{1 \leq a \leq q \\ (a,q) = 1}} \mfr{M}_{a,q}.
\end{equation}
We shall identify the torus $\mb{T}$ with an interval of length $1$ when necessary. 

We use capital letter $C$ with subscripts to denote absolute constants which tend to be large, and $c$ with subscripts to denote absolute constants which are small (and at least less than $1$).

\section{Zeros of Dirichlet $L$-functions}\label{zeros}
In this section, we focus on the number-theoretic results we need. We first show an easy consequence of various results concerning the location of zeros of Dirichlet $L$-functions, showing that Definition~\ref{dichotomy} is exhaustive. Next we list a couple of standard bounds related to the contribution of terms which involve the zeros of $L$-functions.

Let $\chi$ be a Dirichlet character of modulus $q$. We define
$$\psi (x, \chi) := \sum_{n \leq x} \chi(n) \Lambda (n),$$
where $\Lambda$ is the von Mangoldt function. The classical way of studying $\psi (x, \chi)$ is to understand the zeros of the corresponding $L$-function $L(s,\chi)$, which is defined as the analytic continuation of the function
$$\sum_{n = 1}^\infty \dfrac{\chi(n)}{n^s}, \;\; \Re({s}) >1.$$
In fact, to study the location of the zeros of $L$-functions in the critical strip $0 < \Re(s) <1$, it suffices to study the zeros of $L(s,\chi)$ for all primitive characters. This can be seen by using the Euler product expansion. Let $\chi$ be a character of modulus $q$ which is induced by a primitive character $\chi_1$ and let $\chi'$ be the principal character of modulus $q$. It follows from the definition of induced characters that $\chi = \chi_1 \chi'$, and so 
\begin{equation}\label{non-primitive}
L(s, \chi) = \prod_{p \mid q} (1 - \chi_1 (p) p^{-s}) L(s, \chi_1)\;\; \text{for}\;\; \Re(s) >1.
\end{equation}
Therefore, by analytic continuation, $L(s,\chi)=0$ if and only if $L(s, \chi_1) =0$ in the region $\Re(s) >0$. 

For any Dirichlet character $\chi$ and $T \geq 1$, we define
\begin{equation}\label{zero region}
Z(\chi; T) := \{\rho: L(\rho, \chi) = 0, \Re({\rho}) \geq 1/2, |\Im({\rho})| \leq T \}
\end{equation}
and
$$Z(q;T) := \bigcup_{\chi \; (\mathrm{mod} \; q)} Z(\chi ; T).$$
We treat a zero with multiplicity $m$ as $m$ elements in the zero sets above, and we use $|Z(\chi; T)|$ to denote the cardinality of this set.

The following lemma follows from known results about zeros of Dirichlet $L$-functions.
\begin{lemma}\label{dichotomy lemma}
There are positive absolute constants\footnote{We need $C_1 \geq 10$ due to the choice of minor arcs in the final section.} $c_1, c_2$ and $C_1 \geq 10$ such that for any $D \geq 2$ and $T \geq 1$, the following assertions hold. 

Suppose there exists a primitive character $\chi_D$ such that $\chi_D$ has modulus $d_D \leq D$ and $L(s,\chi_D)$ has a zero $\beta_D$ in the region
\begin{equation}\label{lem 3.1 region}
\Re(s) \geq 1 - \dfrac{c_1}{C_1 \log(DT)}, \;\; |\Im(s)| \leq T.
\end{equation}
Then 
\begin{enumerate}[(i)]
\item the zero $\beta_D$ is real and simple, and it is the only zero of $L(s,\chi_D)$ in the region (\ref{lem 3.1 region});
\item there does not exist any other primitive character $\chi$ of modulus $q \leq D^{C_1}$ such that $L(s,\chi)$ has a zero in the region (\ref{lem 3.1 region});
\item (exceptional zero repulsion) for any $d_D \mid d$, all other zeros in $Z(dq; T)$ are in the region
$$\Re(s) \leq 1 -\dfrac{ c_{2} |\log ( ( 1 - \beta_D) \log(dqT))|}{\log (dqT)}, \; |\Im(s)| \leq T.$$
\end{enumerate}

\end{lemma}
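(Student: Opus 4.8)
The plan is to assemble Lemma~\ref{dichotomy lemma} from three standard inputs about zeros of Dirichlet $L$-functions: the classical zero-free region, the Landau--Page theorem on the at-most-one exceptional character, and the Deuring--Heilbronn phenomenon (exceptional zero repulsion). The constant $C_1$ will be fixed first, then $c_1$ and $c_2$ will be chosen small enough (depending on $C_1$) so that all three inputs apply simultaneously inside the region~(\ref{lem 3.1 region}).

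First I would recall the classical zero-free region: there is an absolute constant $c_0 > 0$ such that, for every primitive character $\chi$ of modulus $q$, $L(s,\chi)$ has no zero in the region $\Re(s) \geq 1 - c_0/\log(q(|\Im(s)|+2))$, with the sole possible exception of a single real simple zero arising from a real character. Since for $\chi$ of modulus $d \leq D$ and $|\Im(s)| \leq T$ one has $\log(d(|\Im(s)|+2)) \leq C\log(DT)$, choosing $c_1$ small compared to $c_0$ forces the region~(\ref{lem 3.1 region}) to lie inside the classical zero-free region apart from that one exceptional real zero; this gives part~(i): $\beta_D$ is real, simple, and unique in~(\ref{lem 3.1 region}). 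For part~(ii) I would invoke the Landau--Page theorem, which says that among all primitive characters of modulus $\leq R$ there is at most one (necessarily real) whose $L$-function has a zero exceeding $1 - c/\log R$; applying this with $R = D^{C_1}$ and noting $\log(D^{C_1}T) \leq C_1\log(DT)$, the region~(\ref{lem 3.1 region}) is narrow enough (after shrinking $c_1$ by a further factor depending on $C_1$) that no second primitive character of modulus $\leq D^{C_1}$ can have a zero there — the zero $\beta_D$ already occupies the unique exceptional slot. Here the interplay between the width $c_1/(C_1\log(DT))$ and the Landau--Page width $c/\log(D^{C_1}T)$ is exactly why the factor $C_1$ appears in the denominator of~(\ref{lem 3.1 region}).

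For part~(iii) I would apply the Deuring--Heilbronn repulsion principle in the quantitative form of~\cite[Principle 3, Section 18.1]{IK04}: if $\beta_D = 1 - \lambda/\log(qT)$ is exceptionally close to $1$, then every other zero $\rho$ of any $L(s,\chi)$ with $\chi \bmod m$ and $|\Im(\rho)| \leq T$ satisfies $\Re(\rho) \leq 1 - c\log(1/(\lambda))/\log(mT)$ for an absolute $c > 0$, provided $\lambda$ is small. The hypothesis that $\chi_D$ has a zero in~(\ref{lem 3.1 region}) guarantees $1 - \beta_D \leq c_1/(C_1\log(DT))$, so $\lambda = (1-\beta_D)\log(d_DqT)$ is indeed small (after possibly shrinking $c_1$); applying the repulsion statement with moduli $m$ ranging over the divisors of $dq$ and over all characters mod such $m$ — which is legitimate since $d_D \mid d$ means $\chi_D$ is one of the characters being repelled from — yields the stated bound with $\log(dqT)$ in the denominator and the term $|\log((1-\beta_D)\log(dqT))|$ in the numerator, after absorbing the difference between $\log(d_DqT)$ and $\log(dqT)$ into the constant $c_2$. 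One technical point to handle carefully: the repulsion principle is usually stated for the modulus of the exceptional character; extending it to all of $Z(dq;T)$ requires observing that any character mod $m \mid dq$ is induced by a primitive character of modulus $\leq dq$, whose $L$-function zeros in $\Re(s) > 0$ coincide with those of $L(s,\chi)$ by~(\ref{non-primitive}), so the repulsion bound transfers.

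The main obstacle I anticipate is bookkeeping the constants consistently: $C_1$ must be chosen (as $\geq 10$, for later use) before $c_1$, and $c_1$ must be small enough that (a) the region~(\ref{lem 3.1 region}) sits inside the classical zero-free region, (b) it is narrower than the Landau--Page exceptional width for modulus $D^{C_1}$, and (c) it forces $1 - \beta_D$ small enough for the repulsion principle to kick in with a positive-constant conclusion. None of these is deep, but the order of quantifiers matters and the dependence of $c_1$ on $C_1$ must be made explicit. A secondary point is checking that all the cited results hold uniformly for the full range $D \geq 2$, $T \geq 1$ rather than asymptotically, which is standard but should be stated.
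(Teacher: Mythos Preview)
Your proposal is correct and follows essentially the same route as the paper's own proof, which simply cites Principle~1 of Chapter~18 (zero-free region), Theorem~5.28 (the Landau--Page uniqueness statement), and Principle~3 of Chapter~18 (quantitative exceptional zero repulsion) from Iwaniec--Kowalski~\cite{IK04}. Your write-up is more detailed about the constant bookkeeping and the order in which $C_1$, $c_1$, $c_2$ must be chosen, but the underlying ingredients and their roles match the paper exactly.
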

\begin{proof}
The result follows from Principle 1 Chapter 18 (zero-free region), Principle 3 Chapter 18 (a quantitative version of exceptional zero repulsion) and Theorem 5.28 of Iwaniec and Kowalski~\cite{IK04}.
\end{proof}

We shall split into two different cases depending on whether a possible exceptional zero, as defined in Definition~\ref{dichotomy} below, exists or not. More precisely, our set-up involves two parameters, one of which controls the modulus of the exceptional primitive character and another the height of the rectangle which contains the zeros we need to consider, and they jointly quantify our notion of being exceptional. 

\begin{definition}\label{dichotomy}
Let $C_1$ and $c_1$ be the constants from Lemma~\ref{dichotomy lemma}. Let $D \geq 2$ and $T \geq 1$. 
We say that $(D,T)$ is {\it exceptional} if there exists a unique primitive character $\chi_D$ such that $\chi_D$ has modulus $d_D \leq D$, and $L(s,\chi_D)$ has a zero $\beta_D$ which is real and simple and satisfies $\beta_D \geq 1 - c_1/ (C_1 \log (DT))$. We call $\chi_D$ the {\it exceptional character} and $\beta_D$ the {\it exceptional zero}. Otherwise, we say that $(D,T)$ is {\it unexceptional}.
\end{definition}

By the truncated explicit formula, one has the following estimates, see Iwaniec and Kowalski~\cite[Section 18.4; see also Proposition 5.25]{IK04}.\footnote{Here we have an extra restriction $\Re({\rho}) \geq 1/2$ compared to the explicit formula given in Iwaniec and Kowalski~\cite[Proposition 5.25]{IK04}. The reason is that we have absorbed the error caused by the zeros whose real parts are smaller than $1/2$ into the error term.}
\begin{prop}\label{truncated explicit formula}
Let $q$ be a positive integer and $x > 0$. For any character $\chi$ of modulus $q$ and any $1 \leq T \leq x^{1/4}$, one has
$$\sum_{n \leq x} \Lambda(n) \chi(n) = x 1_{\chi'}(\chi) - \sum_{\rho \in Z(\chi; T)} \dfrac{x^\rho}{\rho} + O \left( \dfrac{x \log^2 (qx)}{T} \right).$$
Here $1_{\chi'} (\chi) = 1$ if $\chi$ is the principal character, and $1_{\chi'}(\chi) = 0$ otherwise.
\end{prop}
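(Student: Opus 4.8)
The plan is to deduce this from the classical truncated explicit formula for primitive Dirichlet characters, as recorded in Iwaniec and Kowalski~\cite[Section 5.6; Proposition 5.25]{IK04}, by carrying out two reductions: first replacing $\chi$ by the primitive character inducing it, and then deleting from the zero sum the zeros with $\Re(\rho) < 1/2$, whose contribution I will bound against the error term. First I would reduce to the primitive case. If $\chi$ is induced by the primitive character $\chi_1$ of modulus $d \mid q$, then $\chi(n) = \chi_1(n)$ whenever $(n,q) = 1$, so $\psi(x,\chi) - \psi(x,\chi_1) = - \sum_{p \mid q} \sum_{k \geq 1,\ p^k \leq x} \chi_1(p^k) \log p$, which is $O(\omega(q) \log x) = O(\log^2(qx))$ and is absorbed into the claimed error since $T \leq x$. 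By~(\ref{non-primitive}) and analytic continuation $L(s,\chi)$ and $L(s,\chi_1)$ have the same zeros in $\Re(s) > 0$, so $Z(\chi;T) = Z(\chi_1;T)$; moreover the principal character modulo $q$ is induced by the trivial character of modulus $1$, whose $L$-function is $\zeta$, so that case reduces to the classical explicit formula for $\psi(x)$. Hence it suffices to establish the formula for $\chi$ primitive, and we may assume $x$ is large.

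Applying \cite[Proposition 5.25]{IK04} to the primitive character $\chi$ gives, for $2 \leq T \leq x$,
$$\psi(x,\chi) = x\, 1_{\chi'}(\chi) - \sum_{\substack{L(\rho,\chi) = 0 \\ 0 < \Re(\rho) < 1,\ |\Im(\rho)| \leq T}} \frac{x^\rho}{\rho} + O\!\left( \frac{x (\log qx)^2}{T} + \log x \right),$$
where the term $\log x$ is absorbed into $x (\log qx)^2 / T$ because $T \leq x^{1/4} \leq x$ (the case $T = 1$ follows from the case $T = 2$, since the zeros with $1 \leq |\Im(\rho)| \leq 2$ contribute only $O(x \log qx)$). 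It remains to delete the zeros $\rho$ with $\Re(\rho) < 1/2$. For these $|x^\rho / \rho| \leq x^{1/2} / |\rho|$, and standard information on the distribution of the zeros — the count $N(T+1,\chi) - N(T,\chi) \ll \log(q(T+2))$ from \cite[Theorem 5.8]{IK04}, together with the zero-free region, which keeps every zero of modulus $<1$ away from the origin apart from a possible real zero arising (through the functional equation) as the mirror of an exceptional zero — shows that their total contribution is $O(x^{1/2}(\log qx)^2)$. Since $1 \leq T \leq x^{1/4}$, this is at most $O(x^{3/4}(\log qx)^2) \leq O(x(\log qx)^2/T)$, and combining with the previous displays yields the proposition.

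The main obstacle is not analytic but bookkeeping: one must verify that the zeros to the left of the critical line — exactly those discarded relative to the explicit formula of~\cite{IK04} — contribute no more than the stated error $O(x(\log qx)^2/T)$. This is precisely where the hypothesis $T \leq x^{1/4}$ enters, as it governs the trade-off between the factor $x^{1/2}$ coming from $|x^\rho| < x^{1/2}$ on the deleted zeros and the target size $x/T$. The one slightly delicate point is the possibility of a zero $\rho$ of very small modulus, which by the zero-free region can only be a real zero $\rho = 1 - \beta$ mirroring an exceptional zero; here one uses that $1 - \beta \ll 1/\log q$, so $x^\rho = x^{1-\beta} = O(1)$ and the contribution is $O(1/|\rho|)$, which the usual lower bounds for $1 - \beta$ keep well below the error term in the ranges of $q$ at issue.
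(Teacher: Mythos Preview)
Your approach is exactly what the paper does: it simply cites Iwaniec--Kowalski~\cite[Section 18.4; Proposition 5.25]{IK04} and remarks in a footnote that the zeros with $\Re(\rho) < 1/2$ have been absorbed into the error term. Your two reductions (pass to the primitive inducing character, then discard the zeros to the left of the critical line using $T \leq x^{1/4}$) are precisely the content behind that citation and footnote.

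One slip to flag. In your treatment of the possible real zero $\rho = 1-\beta$ mirroring an exceptional $\beta$, the inference ``$1-\beta \ll 1/\log q$, so $x^{1-\beta} = O(1)$'' is not valid: $(1-\beta)\log x$ need not be bounded, since nothing in the statement ties $\log x$ to $\log q$, and in the paper's applications one in fact has $\log x \gg \log q$. The clean way to handle this term is to use $x^{1-\beta} \leq x^{1/2}$ together with a standard effective lower bound such as $1-\beta \gg q^{-1/2}(\log q)^{-2}$, giving a contribution $\ll x^{1/2} q^{1/2}(\log q)^{2}$; since $T \leq x^{1/4}$ forces the target error to be at least $x^{3/4}(\log qx)^2$, this is absorbed whenever $q \leq x^{1/2}$, which holds throughout the paper (and without which the proposition is vacuous anyway, as the error term then swamps the main term). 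With this correction your argument goes through.
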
 

Later on, we need to bound the contribution of the zeros in the region $Z(\chi;T)$. The proposition below is introduced for this purpose, and it can be shown by using arguments from Iwaniec and Kowalski~\cite[Section 18.4]{IK04}. It turns out that in the unexceptional case, the zero density estimate~\cite[Principle 2 Chapter 18]{IK04} will be strong enough to produce the desired bound. In the exceptional situation, this is no longer the case, since the size of the main term could be reduced due to the term which contains the exceptional zero. To compensate for this, we use the exceptional zero repulsion to deduce a stronger bound on the contribution of other zeros.

\begin{prop}\label{Explicit formula}
There exist absolute constants $C_2$ and $c_{3}$ such that for all $x, D, T \geq 2$ satisfying $x> (DT)^{C_2}$, we have the following.
\begin{enumerate}[(i)]
\item If $(D,T)$ is unexceptional, then for any $q,d \geq 1$ satisfying $dq \leq D$, we have
$$\sum_{\rho \in Z(dq; T)} {|x^{\rho - 1}|} = O \left( \exp\left( -c_{3} \dfrac{\log x}{\log (DT)} \right) \right).$$
\item If $(D,T)$ is exceptional, then for any $q, d \geq 1$ satisfying $dq \leq D^{C_1}$ and $d_D \mid d$, we have
$$\sum_{\substack{\rho \in Z(dq; T) \\ \rho \neq \beta_D }} {|x^{\rho - 1}|} = O\left( (1 - \beta_D)\log(dqT) \exp\left( -c_{3} \dfrac{ \log x}{\log(dqT)} \right) \right).
$$
Here, $C_1$ is the constant from Definition~\ref{dichotomy}.
\end{enumerate}
\end{prop}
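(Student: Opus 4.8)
The plan is to use $|x^{\rho-1}|=x^{\Re(\rho)-1}$, to sort the zeros by their real part, and to bound the resulting (finite) sum by partial summation against a zero‑counting function. For a modulus $m$ and $\sigma\in[1/2,1]$, let $N(\sigma,T,m)$ denote the number of zeros (with multiplicity) of $\prod_{\chi\,(\mathrm{mod}\,m)}L(s,\chi)$ with $\Re(s)\ge\sigma$ and $|\Im(s)|\le T$; by the Euler product identity~(\ref{non-primitive}) these are governed by the zeros of primitive characters of modulus dividing $m$. Partial summation gives
$$\sum_{\rho\in Z(m;T)} x^{\Re(\rho)-1} \;=\; x^{-1/2}\,N(1/2,T,m)\;+\;(\log x)\int_{1/2}^{1} N(\sigma,T,m)\,x^{\sigma-1}\,d\sigma ,$$
so the task reduces to truncating the $\sigma$‑integral from above and feeding in a zero density estimate.

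The truncation comes from the zero‑free information. In case (i), if $(D,T)$ is unexceptional (Definition~\ref{dichotomy}), then Lemma~\ref{dichotomy lemma}(i) applied to the primitive character inducing a hypothetical zero, together with~(\ref{non-primitive}), shows that \emph{no} $L(s,\chi)$ with $\chi$ of modulus $dq\le D$ has a zero in the region~(\ref{lem 3.1 region}); hence $N(\sigma,T,dq)=0$ for $\sigma>\sigma_0:=1-c_1/(C_1\log(DT))$. In case (ii) the zero $\beta_D$ lies in $Z(dq;T)$ with multiplicity one (since $d_D\mid d$, using Lemma~\ref{dichotomy lemma}(i) and (ii)), and after removing it the exceptional zero repulsion, Lemma~\ref{dichotomy lemma}(iii), forces every remaining zero in $Z(dq;T)$ to have real part at most $\sigma_1:=1-c_2|\log((1-\beta_D)\log(dqT))|/\log(dqT)$. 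For the density input we use \cite[Principle 2 Chapter 18]{IK04} in the log‑free form $N(\sigma,T,m)\ll(mT)^{C(1-\sigma)}$ for $1/2\le\sigma\le1$.

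Substituting $v=1-\sigma$, in case (i) the main integral is
$$(\log x)\int_{1/2}^{\sigma_0} N(\sigma,T,dq)\,x^{\sigma-1}\,d\sigma \;\ll\; (\log x)\int_{c_1/(C_1\log(DT))}^{1/2} \exp\!\big(v\,(C\log(DT)-\log x)\big)\,dv .$$
Taking $C_2$ large in terms of $C$ makes $x>(DT)^{C_2}$ force $C\log(DT)\le\tfrac12\log x$, so the integrand decays rapidly in $v$ and the integral is $\ll(\log x)^{-1}\exp\!\big(-\tfrac{c_1}{2C_1}\cdot\tfrac{\log x}{\log(DT)}\big)$; the factor $\log x$ cancels, and $x^{-1/2}N(1/2,T,dq)\ll x^{-1/2}(DT)^{O(1)}$ is smaller still for $C_2$ large (using $\log(DT)\ge\log 2$). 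This yields (i) for any sufficiently small absolute $c_3$. Case (ii) runs identically with $\sigma_0$ replaced by $\sigma_1$ and $D$ by $dq\le D^{C_1}$: the integral is dominated by its endpoint value $\ll(\log x)^{-1}\exp\!\big(-\tfrac12(1-\sigma_1)\log x\big)=(\log x)^{-1}\big((1-\beta_D)\log(dqT)\big)^{(c_2/2)\theta}$, where $\theta:=\log x/\log(dqT)$.

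The last step of (ii) is to recast $\big((1-\beta_D)\log(dqT)\big)^{(c_2/2)\theta}$ into the asserted shape. One first checks that $\beta_D\ge1-c_1/(C_1\log(DT))$ together with $dq\le D^{C_1}$ gives $y:=(1-\beta_D)\log(dqT)\le c_1<1$; factoring $y^{(c_2/2)\theta}=y\cdot y^{(c_2/2)\theta-1}$ and using that $x>(DT)^{C_2}$ forces $\theta\ge C_2/C_1$ (large), so $(c_2/2)\theta-1\ge(c_2/4)\theta$, one gets $y^{(c_2/2)\theta-1}\le\exp\!\big(-\tfrac{c_2}{4}\theta\,|\log y|\big)\le\exp\!\big(-\tfrac{c_2}{4}\log(1/c_1)\,\theta\big)$, which gives (ii) with $c_3=\tfrac{c_2}{8}\log(1/c_1)$ (shrunk further if needed, also to keep $x^{-1/2}N(1/2,T,dq)$ below the right‑hand side, which uses a classical lower bound $1-\beta_D\gg d_D^{-1}$). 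I expect the genuine work to be bookkeeping: fixing the hierarchy of constants ($C_2$ large in terms of $C_1$, $c_1$, $c_2$ and the density exponent $C$) and verifying the inequality $(1-\beta_D)\log(dqT)<1$, which is precisely what turns the repulsion estimate into a quantitative gain in the exceptional case.
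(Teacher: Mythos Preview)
Your argument is correct and is precisely the approach the paper points to: the paper does not spell out a proof but refers to Iwaniec--Kowalski \cite[Section~18.4]{IK04}, whose method is exactly the partial-summation-plus-log-free-zero-density computation you carry out, with the truncation at $\sigma_0$ (resp.\ $\sigma_1$) supplied by the zero-free region (resp.\ the exceptional zero repulsion of Lemma~\ref{dichotomy lemma}(iii)). The constant bookkeeping you flag (taking $C_2$ large in terms of $C_1$, $c_1$, $c_2$ and the density exponent, and invoking an effective lower bound for $1-\beta_D$ to absorb the boundary term $x^{-1/2}N(1/2,T,dq)$) is routine and matches the standard treatment.
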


\section{Major arc estimates}\label{major arc estimate section}
For any positive integers $N,d$, let
\begin{equation}\label{def}
F_{N,d} (n) :=  \Lambda(dn + 1) 1_{[N]} (n),
\end{equation}
where $1_{[N]}$ is the characteristic function of the set $[N]$.

Such functions are used to detect primes and prime powers in arithmetic progressions, which will in turn provide the desired structure in the iteration scheme in the upcoming section. The iteration is done by an energy increment argument, and we shall need appropriate estimates of the Fourier transform of $F_{N,d}$ to effect this.

The goal of this section is to prove the following result.
\begin{prop}[Major arc estimates]\label{major arc estimate}
There exist positive absolute constants $C_3$ and $c_4$ such that the following holds. 

Let $T, D \geq 2$ and let $N$ be a positive integer such that $N > (DT)^{C_3}$. 

\begin{enumerate}[(1)]
\item Suppose that $(D,T)$ is unexceptional. Then for any $\delta \in [-1/2, 1/2]$ and any positive integers $a,d,q$ satisfying $(a,q)=1$ and $dq \leq D$, we have
\begin{multline*}
\left| \widehat{F_{N,d}} \left( \dfrac{a}{q} + \delta \right) \right| \leq \dfrac{2 |\widehat{F_{N,d}} (0)| }{\phi(q)}+  O\left( \dfrac{dNq}{\phi (d)\phi(q)}  \exp\left(- c_4 \dfrac{\log N}{\log (DT)} \right) \right)
+ O \left( (1 + N|\delta| )\dfrac{dqN\log^2 N}{T} \right).
\end{multline*}
We also have
$$\left| \widehat{F_{N,d}} \left( 0 \right) \right| \geq \dfrac{dN}{2\phi(d)}  - O \left( \dfrac{dN\log^2 N}{T} \right).$$

\item Suppose that $(D,T)$ is exceptional. Then for any $\delta \in [-1/2, 1/2]$ and any positive integers $a,d,q$ satisfying $(a,q)=1$, $dq \leq D^{C_1}$ and $d_D \mid d$, we have
\begin{multline*}
\left| \widehat{F_{N,d}} \left( \dfrac{a}{q} + \delta \right) \right| \leq \dfrac{2 |\widehat{F_{N,d}} (0)| }{\phi(q)}+ 
O\left( \dfrac{dNq}{\phi (d)\phi(q)} (1 - \beta_D)\log(dqT) \exp\left( - c_4 \dfrac{ \log N}{\log(dqT)} \right) \right) \\
+ O \left( (1 + N|\delta| )\dfrac{dqN\log^2 N}{T} \right).
\end{multline*}
We also have
$$\left| \widehat{F_{N,d}} \left( 0 \right) \right| \geq \dfrac{dN}{\phi(d)} \dfrac{(1 - \beta_D)\log(dT)}{4c_1} - O \left( \dfrac{dN\log^2 N}{T} \right).$$
Here $C_1$ and $c_1$ are the constants in Definition~\ref{dichotomy}.
\end{enumerate}
\end{prop}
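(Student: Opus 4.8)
The plan is to write the Fourier transform as a sum of von Mangoldt values in arithmetic progressions, insert the truncated explicit formula (Proposition~\ref{truncated explicit formula}), peel off the main term and---in the exceptional case---the exceptional zero, and estimate the leftover zeros with Proposition~\ref{Explicit formula}. First I would use $\widehat{F_{N,d}}(\theta)=\sum_{n\le N}\Lambda(dn+1)e(-n\theta)$, set $\theta=a/q+\delta$, group by $n\bmod q$, and substitute $m=dn+1$, so that each residue class becomes a single class modulo $dq$:
$$\widehat{F_{N,d}}(a/q+\delta)=\sum_{k=1}^q e(-ka/q)\sum_{\substack{m\le dN+1\\ m\equiv dk+1\ (\mathrm{mod}\ dq)}}\Lambda(m)\,g(m),\qquad g(t):=e(-(t-1)\delta/d).$$
The residues $k$ with $\gcd(dk+1,q)>1$, together with all $m$ not coprime to $dq$, contribute at most $\sum_{\gcd(m,dq)>1}\Lambda(m)=O(\log^2(dqN))$ and are discarded. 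For the remaining $k$ orthogonality of characters modulo $dq$ is exact, and partial summation together with Proposition~\ref{truncated explicit formula} (used on $[T^4,dN+1]$, and trivially via Chebyshev on $[1,T^4]$) rewrites each inner sum as $\tfrac1{\phi(dq)}\sum_{\chi}\overline{\chi}(dk+1)\big[\mathbf{1}_{\chi'}(\chi)(1+J_0)-\sum_{\rho\in Z(\chi;T)}(\tfrac1\rho+J_\rho)\big]$ plus an error, where $J_0:=\int_1^{dN+1}g(t)\,dt$ and $J_\rho:=\int_1^{dN+1}g(t)t^{\rho-1}\,dt$, so that $|J_0|\le dN$ and $|J_\rho|\le 2(dN+1)^{\Re\rho}$. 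A careful but routine accounting---made possible by $N>(DT)^{C_3}$ with $C_3$ a large absolute constant, so that powers of $T$ are dominated by $N$---shows the accumulated errors total $O\big((1+N|\delta|)\,dqN\log^2 N/T\big)$. Summing over $k$ yields
$$\widehat{F_{N,d}}(a/q+\delta)=\frac{(1+J_0)\,S}{\phi(dq)}-\frac1{\phi(dq)}\sum_{\chi\ (\mathrm{mod}\ dq)}W_\chi\sum_{\rho\in Z(\chi;T)}\Big(\frac1\rho+J_\rho\Big)+O\Big((1+N|\delta|)\frac{dqN\log^2 N}{T}\Big),$$
with $S:=\sum_{\gcd(dk+1,q)=1}e(-ka/q)$ and $W_\chi:=\sum_{k=1}^q e(-ka/q)\overline{\chi}(dk+1)$.

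Next I would evaluate $S$ by writing $q=q_1q_2$ with $q_1$ the largest divisor of $q$ coprime to $d$: the condition $\gcd(dk+1,q)=1$ restricts only $k\bmod q_1$, so $S$ factors through $\sum_{k_2\bmod q_2}e(-k_2a_2/q_2)$, which vanishes unless $q_2=1$ because $\gcd(a,q)=1$. Hence $S=0$ if $\gcd(d,q)>1$, while if $\gcd(d,q)=1$ a Ramanujan-sum evaluation gives $|S|=|\mu(q)|\le1$ and $\phi(dq)=\phi(d)\phi(q)$. Running the same computation with $q=1$ gives $\widehat{F_{N,d}}(0)=\tfrac{dN+1}{\phi(d)}-\tfrac1{\phi(d)}\sum_{\rho\in Z(d;T)}\tfrac{(dN+1)^\rho}{\rho}+O(dN\log^2 N/T)$. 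In the unexceptional case Proposition~\ref{Explicit formula}(i) (applied with $d$ in place of $dq$) bounds the zero sum by $O(dN\exp(-c_3\log N/\log(DT)))$, which is at most $\tfrac14\cdot\tfrac{dN}{\phi(d)}$ once $C_3$ is large, so $|\widehat{F_{N,d}}(0)|\ge\tfrac{dN}{2\phi(d)}-O(dN\log^2 N/T)$; the main term $\tfrac{(1+J_0)S}{\phi(dq)}$ then has modulus at most $\tfrac{|\mu(q)|dN}{\phi(d)\phi(q)}+O(1)\le\tfrac{2|\widehat{F_{N,d}}(0)|}{\phi(q)}+O((1+N|\delta|)\tfrac{dqN\log^2 N}{T})$, while the zero terms are bounded by $\tfrac{2qX}{\phi(dq)}\sum_{\rho\in Z(dq;T)}|X^{\rho-1}|+O(qT\log(dqT))$ with $X=dN+1$, which Proposition~\ref{Explicit formula}(i) converts into $O(\tfrac{dNq}{\phi(d)\phi(q)}\exp(-c_4\log N/\log(DT)))$. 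This proves part~(1).

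For part~(2) the decisive observation is that $d_D\mid d$ forces $dk+1\equiv1\pmod{d_D}$ for every $k$, so the exceptional character $\chi_D$---induced up to modulus $dq$---agrees with the principal character on all the relevant residues; hence $W_{\chi_D}=S$, the exceptional zero $\beta_D$ lies in $Z(dq;T)$ with multiplicity exactly one (by Lemma~\ref{dichotomy lemma}(i)--(ii) and the non-vanishing at $\beta_D>0$ of the Euler factors removed in passing to the inducing primitive character), and the main term plus the exceptional-zero term collapses to $\tfrac{S}{\phi(dq)}\big[(1+J_0)-(\tfrac1{\beta_D}+J_{\beta_D})\big]$. Using $|1-t^{\beta_D-1}|\le(1-\beta_D)\log t$ one obtains $\big|(1+J_0)-(\tfrac1{\beta_D}+J_{\beta_D})\big|\le\big(X-X^{\beta_D}/\beta_D\big)+O(1)$, and the elementary inequality $X-X^{\beta_D}/\beta_D\ge\tfrac{dN(1-\beta_D)\log(dT)}{4c_1}$---proved by splitting according to whether $(1-\beta_D)\log X\le1$, using $1-\beta_D\le c_1/(C_1\log(DT))$ and $C_1\ge10$---combined with the $q=1$ computation and Proposition~\ref{Explicit formula}(ii) yields both the stated lower bound for $|\widehat{F_{N,d}}(0)|$ and the inequality $\tfrac{|S|}{\phi(dq)}\big(X-X^{\beta_D}/\beta_D\big)\le\tfrac{|\widehat{F_{N,d}}(0)|}{\phi(q)}$ up to admissible error terms. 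The zeros $\rho\neq\beta_D$ in $Z(dq;T)$ are treated as in part~(1) but with Proposition~\ref{Explicit formula}(ii), which is what produces the factor $(1-\beta_D)\log(dqT)$ in the error; assembling everything gives the bound of part~(2).

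The main obstacle is precisely this exceptional case: the ``main term'' of $\widehat{F_{N,d}}(a/q+\delta)$ has size $\asymp dN/(\phi(d)\phi(q))$, which already exceeds the target $2|\widehat{F_{N,d}}(0)|/\phi(q)\asymp dN(1-\beta_D)\log(dT)/(\phi(d)\phi(q))$, so one must organise the decomposition so that it genuinely cancels against the exceptional-zero term down to that scale---this is exactly where $d_D\mid d$ is used---and extract honest cancellation in $J_0-J_{\beta_D}$ rather than bounding the two integrals separately. The only other real source of friction is the bookkeeping verifying that every error produced by the partial summations and by the small-$t$ ranges is absorbed into $O((1+N|\delta|)\,dqN\log^2 N/T)$, which is what forces $C_3$ to be a sufficiently large absolute constant.
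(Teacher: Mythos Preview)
Your proposal is correct and follows essentially the same route as the paper: expand $\widehat{F_{N,d}}$ via characters modulo $dq$ (your $S,W_\chi$ are the paper's $G_{a,q,d,\chi'},G_{a,q,d,\chi}$), insert the truncated explicit formula after partial summation, isolate the principal and---when $d_D\mid d$---exceptional contributions using $\chi_D(dk+1)=1$, and bound the remaining zeros by Proposition~\ref{Explicit formula}. The only cosmetic differences are that the paper packages the partial-summation step as a separate lemma producing the integral $\int_{N^{1/8}}^{dN}(1_{\chi'}-\sum_\rho t^{\rho-1})e^{-2\pi i\delta t}\,dt$ rather than your boundary-plus-integral form $\tfrac{1}{\rho}+J_\rho$, and obtains the lower bound on $X-X^{\beta_D}/\beta_D$ via $1-e^{-x}\ge x/(x+1)$ instead of your case split on $(1-\beta_D)\log X\le 1$; both choices give the same constants up to harmless factors.
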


The purpose of our first lemma is to write the Fourier transform of the function $F_{N,d}$ in a way which allows us to use known techniques related to exponential sums and $\psi(x, \chi).$ 
\begin{lemma}\label{rearrange the sum}
Let $N,a,d,q$ be positive integers and let $-1/2 \leq \kappa \leq 1/2$. One has
$$\widehat{F_{N,d}} \left( \dfrac{a}{q} + \kappa \right) =  \dfrac{1}{\phi (dq)} \sum_{\chi \; (\mathrm{mod} \; dq) } e\left(\dfrac{\kappa}{d}\right) S_{dN+1} \left( \dfrac{\kappa}{d}, \chi \right) G_{a,q,d,\chi} + O \left( (\log (dN) ) (\log q) \right), $$
where
\begin{equation}\label{sum along characters}
S_x (\delta, \chi) := \sum_{n \leq x} \Lambda(n) \chi(n) e\left( - n \delta \right)
\end{equation}
and 
\begin{equation}\label{Gauss type exponential sum}
G_{a,q,d,\chi} :=  \sum_{m =0}^{q-1} e\left( -\dfrac{am}{q} \right) \overline{\chi}(dm+1).
\end{equation}
\end{lemma}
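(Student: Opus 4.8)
The plan is to expand the definition of the Fourier transform of $F_{N,d}$ and decompose the range of summation according to residues modulo $q$. Writing $\widehat{F_{N,d}}(a/q + \kappa) = \sum_{n \leq N} \Lambda(dn+1) e(-n(a/q+\kappa))$, I would first group the $n$'s by their residue class $m$ modulo $q$, so that $n = qk + m$ with $0 \leq m \leq q-1$. This turns the phase $e(-na/q)$ into $e(-ma/q)$, independent of $k$. The inner sum over $k$ then runs over $\Lambda(d(qk+m)+1) = \Lambda((dq)k + (dm+1))$, i.e.\ von Mangoldt weights on the arithmetic progression $dm+1 \pmod{dq}$. To detect this progression I would invoke orthogonality of Dirichlet characters modulo $dq$: provided $(dm+1, dq) = 1$ (which automatically holds since $dm+1$ is coprime to both $d$ and, being $\equiv 1$, we must check coprimality to $q$ — actually $\gcd(dm+1, dq)$ divides $\gcd(dm+1,d)\gcd(dm+1,q)$ and the first is $1$, so only the $\gcd$ with $q$ matters), we can write $1_{n \equiv dm+1 (dq)}$ as $\frac{1}{\phi(dq)}\sum_{\chi (dq)} \overline{\chi}(dm+1)\chi(\cdot)$.

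Carrying this out, the main term becomes
$$\frac{1}{\phi(dq)} \sum_{m=0}^{q-1} e\left(-\frac{am}{q}\right) \sum_{\chi (dq)} \overline{\chi}(dm+1) \sum_{\substack{n \leq dN+1 \\ n \equiv dm+1 (dq)}} \Lambda(n)\, e\!\left(-\frac{n - (dm+1)}{dq} \cdot \frac{q}{d}\cdot\frac{1}{q}\,\cdots\right),$$
so I need to be careful to track how the phase $e(-n\kappa)$ transforms under the substitution $n \mapsto dn+1$. Concretely $e(-n\kappa) = e(-(m+qk)\kappa)$, and after passing to the variable $n' = dn+1 = (dq)k + (dm+1)$ one has $n = (n'-1)/d$, so $e(-n\kappa) = e(\kappa/d)\,e(-n'\kappa/d)$. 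This is exactly where the factor $e(\kappa/d)$ and the argument $\kappa/d$ in $S_{dN+1}(\kappa/d,\chi)$ come from. Swapping the order of the $m$-sum and the $\chi$-sum, the $m$-dependent factors collect into $G_{a,q,d,\chi} = \sum_{m=0}^{q-1} e(-am/q)\overline{\chi}(dm+1)$, while the sum over $n'$ in the progression, rewritten via characters as a sum over all $n' \leq dN+1$ weighted by $\chi(n')$, becomes $S_{dN+1}(\kappa/d,\chi)$.

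The error term $O((\log(dN))(\log q))$ arises from two sources that I would isolate at the start. First, the orthogonality identity $1_{n \equiv b (dq)} = \frac{1}{\phi(dq)}\sum_\chi \overline{\chi}(b)\chi(n)$ is only valid when $(n, dq) = 1$; the prime powers $n = dn+1 \leq dN+1$ that share a factor with $dq$ contribute at most $\sum_{p \mid dq} \sum_{p^j \leq dN+1} \log p \ll (\log(dN))\,\omega(dq) \ll (\log(dN))(\log q)$ after noting $\omega(dq) \ll \log(dq)$ — one should check the constant absorbs the $d$ via the relation to $N$, or more simply bound $\omega(dq) \ll \log q + \log d$ and note the $\log d$ piece is dominated. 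Second, residues $m$ with $\gcd(dm+1, q) > 1$ must be excluded from the character-sum representation, and their direct contribution is similarly $O((\log dN)(\log q))$.

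The main obstacle, I expect, is the bookkeeping around the change of variables $n \mapsto dn+1$ and keeping the phases consistent: one must verify that $e(-n(a/q+\kappa))$ genuinely factors as (a $\chi$-twist absorbed into $G$) times $e(\kappa/d)e(-(dn+1)\kappa/d)$, and that the parameter in $S$ is $\kappa/d$ and not $\kappa$ or $\kappa q/d$. A secondary subtlety is confirming that $G_{a,q,d,\chi}$ as defined — summing $\overline{\chi}(dm+1)$ over \emph{all} $m \in \{0,\dots,q-1\}$ rather than only those with $dm+1$ coprime to $q$ — is consistent, which it is because $\overline{\chi}(dm+1) = 0$ whenever $\gcd(dm+1,dq) > 1$ by the convention that characters vanish on non-units; this is precisely why the error term can be stated cleanly. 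Once these phase and coprimality points are pinned down, the rest is a direct reorganisation of finite sums with no analytic input.
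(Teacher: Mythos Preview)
The paper does not actually supply a proof of this lemma; it is stated and then used without argument. Your plan is correct and is the standard derivation: substitute $n'=dn+1$, split $n$ into residue classes $m$ modulo $q$ so that $e(-na/q)=e(-ma/q)$ and $e(-n\kappa)=e(\kappa/d)e(-n'\kappa/d)$, and then expand the indicator of the progression $n'\equiv dm+1\pmod{dq}$ via orthogonality of characters modulo $dq$; swapping the $m$- and $\chi$-sums produces $G_{a,q,d,\chi}$ and $S_{dN+1}(\kappa/d,\chi)$ exactly as stated.

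One small clarification on your error analysis: the ``two sources'' you list are really the same source. Since $(dm+1,d)=1$ always, one has $(dm+1,dq)=(dm+1,q)$, so the only problematic residues are those $m$ with $(dm+1,q)>1$; for these $\overline{\chi}(dm+1)=0$ automatically, and the discarded contribution consists precisely of the prime powers $n'=p^{j}\le dN+1$ with $p\mid q$ and $p^{j}\equiv 1\pmod d$. Bounding this by $\sum_{p\mid q}\sum_{p^{j}\le dN+1}\log p\le \omega(q)\log(dN+1)\ll(\log q)(\log dN)$ gives the stated error directly, with no separate $\log d$ term to absorb.
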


The expansion involving Dirichlet characters helps us to reduce the task of obtaining major arc estimates to estimating sums $G_{a,q,d,\chi}$ and $S_{dN+1}(\delta, \chi)$.

It turns out that we shall only need a nontrivial bound on $G_{a,q,d,\chi}$ when $\chi = \chi'$ is the principal character. In this situation, it follows from either an application of the Ramanujan sum formula~\cite[Section 3.2, Equation (3.3)]{IK04} or a cancellation of exponential sums that
\begin{equation}\label{Gauss sum principal character}
|G_{a,q,d,\chi'}| = 1 \text{ if $(d,q) = 1$, and } G_{a,q,d,\chi'} = 0 \text{ otherwise} .
\end{equation}
For the other characters, we shall use the trivial bound $|G_{a,q,d,\chi}| \leq q$, which comes from adding the absolute value of each term in the exponential sum.

We shall also need to estimate sums of form $S_{dN+1}(\delta, \chi)$ where $\delta$ is relatively close to $0$. By partial summation and classical complex-analytic number theory, we can express the sum $S_{dN+1}(\delta, \chi)$ in terms of certain zeros of $L(s,\chi)$ up to a small error term. Consequently, we can perform integration by parts when we estimate the impact of these zeros, instead of simply integrating the absolute value of the error term caused by them. Later on, we need to apply the major arc estimate when $\delta \gg d^4 /N$ (see (\ref{major arc - 2})), and this explicit computation helps us to deal with such situations.

The aim of the next lemma is to connect the sum $S(\delta, \chi)$ with zeros of $L(s,\chi)$, which is done by partial summation. 
\begin{lemma}\label{estimate}
Let $N,q,d$ be positive integers. For any Dirichlet character $\chi$ of modulus $dq$, $-1/2 \leq \delta \leq 1/2$ and\footnote{The exponent $1/32$ is introduced purely for technical reasons. It follows from the restriction of $T$ in Proposition~\ref{truncated explicit formula} and our truncation of integral in the proof of Lemma~\ref{estimate}.} $1 \leq T \leq N^{1/32}$, we have
\begin{equation*}
 S_{dN+1} \left( \delta , \chi \right) =  \int_{N^{1/8}}^{dN+1}  \left( 1_{\chi'} (\chi) -\sum_{\rho \in Z(\chi; T)} t^{\rho-1} \right) e^{-2\pi i \delta t} dt + O\left( (1+ dN |\delta|) \dfrac{dN \log^2 (dqN)}{T} \right).
 \end{equation*}
\end{lemma}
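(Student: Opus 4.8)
The plan is a partial summation to convert the exponential sum $S_{dN+1}(\delta,\chi)$ into an integral against $\psi(t,\chi)$, followed by substitution of the truncated explicit formula (Proposition~\ref{truncated explicit formula}). Writing $\psi(t,\chi) = \sum_{n \le t}\Lambda(n)\chi(n)$, Riemann--Stieltjes summation gives
\[
S_{dN+1}(\delta,\chi) = \int_{1^-}^{dN+1} e^{-2\pi i \delta t}\, d\psi(t,\chi).
\]
Since Proposition~\ref{truncated explicit formula} is only valid in the range $T \le t^{1/4}$, and the hypothesis $T \le N^{1/32}$ gives $T^{4} \le N^{1/8}$, I would split this integral at $N^{1/8}$. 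The piece over $[1,N^{1/8}]$ is bounded in absolute value by $\sum_{n \le N^{1/8}} |\Lambda(n)\chi(n)| \le \psi(N^{1/8}) \ll N^{1/8}$ by Chebyshev's estimate, and this is absorbed into the claimed error term because $T N^{1/8} \le N^{5/32} \ll dN\log^2(dqN)$.

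For the range $[N^{1/8}, dN+1]$, set $M(t) := t\,1_{\chi'}(\chi) - \sum_{\rho \in Z(\chi;T)} t^{\rho}/\rho$. Proposition~\ref{truncated explicit formula}, applied with modulus $dq$ and using $dq\cdot t \le (dqN)^{O(1)}$ throughout this range, says $\psi(t,\chi) = M(t) + E(t,\chi)$ with $|E(t,\chi)| \ll t\log^2(dqN)/T$ uniformly for $N^{1/8}\le t\le dN+1$. As $Z(\chi;T)$ is a finite set, $M$ is continuously differentiable with
\[
M'(t) = 1_{\chi'}(\chi) - \sum_{\rho \in Z(\chi;T)} t^{\rho-1},
\]
which is precisely the integrand appearing in the lemma. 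Splitting $d\psi = M'(t)\,dt + dE(t,\chi)$ on this interval, the term $\int_{N^{1/8}}^{dN+1} e^{-2\pi i\delta t} M'(t)\,dt$ is the desired main term, so it remains to bound $\int_{N^{1/8}}^{dN+1} e^{-2\pi i\delta t}\,dE(t,\chi)$. Integrating by parts,
\[
\int_{N^{1/8}}^{dN+1} e^{-2\pi i\delta t}\,dE(t,\chi) = \Bigl[ e^{-2\pi i\delta t}E(t,\chi) \Bigr]_{N^{1/8}}^{dN+1} + 2\pi i\delta \int_{N^{1/8}}^{dN+1} E(t,\chi)e^{-2\pi i\delta t}\,dt ;
\]
the boundary terms are $O(dN\log^2(dqN)/T)$ (the bound on $E$ at $t = dN+1$, and the trivial inequality $N^{1/8}\ll dN$ at $t = N^{1/8}$), while the last integral is $\ll |\delta|\int_{N^{1/8}}^{dN+1} t\log^2(dqN)/T\,dt \ll dN|\delta|\cdot dN\log^2(dqN)/T$. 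Combining the short-interval contribution, the main term, and these two estimates gives the stated identity.

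The argument is essentially a routine partial summation, and I do not anticipate a serious obstacle. The only points that need genuine care are (a) the restriction $T \le t^{1/4}$ in Proposition~\ref{truncated explicit formula}, which forces the truncation of the integral at $N^{1/8}$ and hence the technical hypothesis $T \le N^{1/32}$ in the statement, and (b) the bookkeeping of the error, where the factor $1 + dN|\delta|$ cleanly separates the boundary and short-interval contributions (the ``$1$'') from the contribution of $\int E(t,\chi)e^{-2\pi i\delta t}\,dt$ (the ``$dN|\delta|$''). If one prefers to avoid Riemann--Stieltjes language, the identical estimates go through via classical Abel summation applied to $\psi(dN+1,\chi)$ and to $\psi(t,\chi)$ inside the resulting integral, at the cost of two extra and equally harmless boundary terms.
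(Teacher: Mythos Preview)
Your proof is correct and follows essentially the same strategy as the paper: partial summation against $\psi(t,\chi)$, truncation at $N^{1/8}$, and insertion of the truncated explicit formula. Your organisation via the decomposition $\psi = M + E$ and integration by parts on the $dE$ piece is slightly cleaner than the paper's (which applies Abel summation first, substitutes the explicit formula into both resulting pieces, and then integrates by parts back), and it has the minor advantage of treating $\delta = 0$ and $\delta \neq 0$ uniformly and of not needing the zero-count bound $|Z(\chi;T)| \ll T\log(dqT)$ that the paper invokes for its boundary term at $t = N^{1/8}$.
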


\begin{proof}
We first deal with the case when $\delta \neq 0$. By Abel's summation formula, we have
$$S_{dN+1}\left( \delta , \chi \right) =  e^{- 2\pi i \delta (dN+1)} \sum_{n \leq dN+1} \Lambda(n) \chi(n)  \\ + 2\pi i {\delta} \int_{1}^{dN+1} \left(  \sum_{1 < n \leq t} \Lambda(n) \chi(n) \right) e^{-2\pi i t \delta} dt.$$
It follows that
\begin{multline*}
S_{dN+1} \left( \delta , \chi \right) = e^{- 2\pi i \delta (dN+1)} \sum_{n \leq dN+1} \Lambda(n) \chi(n)  \\ + 2\pi i {\delta} \int_{N^{1/8}}^{dN+1} \left(  \sum_{1 < n \leq t} \Lambda(n) \chi(n) \right) e^{-2\pi i t \delta} dt 
+ O \left( |\delta| N^{1/4} \log N \right),
\end{multline*}
where the error term is obtained by bounding the integral over $[1, N^{1/8}]$ by
$$ \left| 2\pi i \delta \int_{1}^{N^{1/8}} \left(  \sum_{1 < n \leq t} \Lambda(n) \chi(n) \right) e^{-2\pi i t \delta} dt \right| \leq 2 \pi |\delta| N^{1/8} N^{1/8} \log N =  O \left( |\delta| N^{1/4} \log N \right).$$

By substituting the expression given in Proposition~\ref{truncated explicit formula}, we have
\begin{multline*}
 S_{dN+1} \left( \delta , \chi \right) =   e^{- 2\pi i \delta (dN+1)} \left( (dN+1) 1_{\chi'} (\chi) - \sum_{\rho \in Z(\chi; T)} \dfrac{(dN+1)^\rho}{\rho} \right) \\ + 2 \pi i \delta \int_{N^{1/8}}^{dN+1}  \left( t 1_{\chi'}(\chi) - \sum_{\rho \in Z(\chi; T)}\dfrac{t^\rho}{\rho} \right) e^{-2\pi i \delta t} dt + O\left( (1+ dN |\delta|) \dfrac{dN\log^2 (dqN)}{T} \right).
 \end{multline*}
The error term above follows from integrating the error term in the explicit formula. By integration by parts, we have
\begin{align*}
&2\pi i \delta \int_{N^{1/8}}^{dN+1}  \left( t 1_{\chi'} (\chi) - \sum_{\rho \in Z(\chi;T)}\dfrac{t^\rho}{\rho} \right) e^{-2\pi i \delta t} dt \\ 
= & - \left[   e^{-2\pi i \delta t} \left( t 1_{\chi'} (\chi) - \sum_{\rho \in Z(\chi;T)}\dfrac{t^\rho}{\rho} \right)\right]_{N^{1/8}}^{dN+1} + \int_{N^{1/8}}^{dN+1}  \left( 1_{\chi'}(\chi)  - \sum_{\rho \in Z(\chi;T)} t^{\rho-1} \right) e^{-2\pi i \delta t} dt \\
= & - e^{- 2\pi i \delta (dN+1)} \left( (dN+1) 1_{\chi'} (\chi) - \sum_{\rho \in Z(\chi; T)}\dfrac{(dN+1)^\rho}{\rho} \right) \\ 
& +  \int_{N^{1/8}}^{dN+1}  \left(1_{\chi'}(\chi)  - \sum_{\rho \in Z(\chi;T)} t^{\rho-1} \right) e^{-2\pi i \delta t} dt  +  O\left( N^{1/8} T \log(dqT) \right),
\end{align*}
where the error term comes from bounding the term $e^{-2\pi i \delta N^{1/8}} ( N^{1/8} 1_{\chi'} (\chi) - \sum_{\rho \in Z(\chi;T)}{N^{\rho/8}}/{\rho} )$ using upper bound on $|Z(\chi; T)|$ (see Iwaniec--Kowalski~\cite[Theorem 5.24, or Principle 2 Chapter 18]{IK04}). The lemma follows from the equations above.

For $\delta = 0$, we have $S_{dN+1}\left( 0 , \chi \right) = \sum_{n \leq dN+1} \Lambda(n) \chi(n)$, and so an application of Proposition~\ref{truncated explicit formula} yields
\begin{multline*}
S_{dN+1}\left( 0 , \chi \right) =  (dN+1) 1_{\chi'}(\chi) - \sum_{\rho \in Z(\chi; T)}\dfrac{(dN+1)^\rho}{\rho} + O \left( \dfrac{dN \log^2 (dqN)}{T} \right) \\
=  \int_{N^{1/8}}^{dN+1}  \left( 1_{\chi'} (\chi) -\sum_{\rho \in Z(\chi; T)} t^{\rho-1} \right)  dt +  O \left( \dfrac{dN \log^2 (dqN)}{T} \right).
\end{multline*}

\end{proof}

\begin{proof}[Proof of Proposition~\ref{major arc estimate}]
Here we prove the proposition for $(D,T)$ exceptional; the unexceptional case can be shown in a similar manner.

By substituting Lemma~\ref{estimate} into Lemma~\ref{rearrange the sum}, we have
\begin{multline*}
\widehat{F_{N,d}}\left( \dfrac{a}{q} + \delta \right) = \dfrac{e^{2\pi i \delta/d}}{\phi(dq)} \int_{N^{1/8}}^{dN} e^{-2 \pi i t \delta /d} \left( G_{a,q,d,\chi'} -
 t^{\beta_D - 1} G_{a,q,d,\chi'\chi_D} \right) dt  \\
- \dfrac{e^{2\pi i \delta/d}}{\phi(dq)} \sum_{\chi \; (\mathrm{mod} \; dq) } \int_{N^{1/8}}^{dN} \left( \sum_{\substack{\rho \in Z(\chi; T) \\ \rho \neq \beta_D}} t^{\rho - 1} \right) e^{-2\pi i t \delta /d} G_{a,q,d,\chi}dt 
+ O \left( (1 + N|\delta| )\dfrac{dqN\log^2 N}{T} \right),
\end{multline*}
where $\chi'$ is the principal character of modulus $dq$.

Since the modulus of $\chi_D$ is a divisor of $d$, we have $\chi_D(dm+1) = 1$ and it follows that
\begin{equation*}
\chi' (dm+1) = \chi'\chi_D (dm+1)
\end{equation*}
for all $0 \leq m \leq q-1$. Therefore, the first integral appearing in the expression for $\widehat{F_{N,d}} (a/q +\delta)$ above is equal to
\begin{equation}\label{exceptional character equality}
\dfrac{e^{2\pi i \delta/d} G_{a,q,d,\chi'}}{\phi(dq)} \int_{N^{1/8}}^{dN} \left( 1-  t^{\beta_D - 1} \right)e^{-2\pi i t \delta /d} dt.
\end{equation}
On the other hand, by the second assertion of Proposition~\ref{Explicit formula}, for sufficiently large $C_3$ and $t > N^{1/8}$ we have
\begin{multline*}
  \dfrac{1}{\phi(dq)} \sum_{\chi \; (\mathrm{mod} \; dq) }  \left| - e^{2\pi i (\delta/d - t\delta/d)} \sum_{ \substack{\rho \in Z(\chi; T) \\ \rho \neq \beta_D}} t^{\rho-1} G_{a,q,d,\chi} \right| \\
\leq  \dfrac{ |G_{a,q,d,\chi}| }{\phi(d)\phi(q)}  \left( \sum_{\chi \; (\mathrm{mod} \; dq) }\sum_{\substack{\rho \in Z(\chi; T) \\ \rho \neq \beta_D}} |t^{\rho-1}| \right)
 \ll \dfrac{q}{\phi(d)\phi(q)} (1 - \beta_D)\log(dqT) \exp\left( - \dfrac{c_{3}}{8} \dfrac{ \log N}{\log(dqT)} \right),
\end{multline*}
where we take the trivial bound $|G_{a,q,d,\chi}| \leq q$ and use the inequality $\phi(dq) \geq \phi(d)\phi(q)$. It follows that
\begin{multline*}
\left| \widehat{F_{N,d}} \left( \dfrac{a}{q} + \delta \right) - \dfrac{e^{2\pi i \delta /d} G_{a,q,d,\chi'}}{\phi(dq)} \int_{N^{1/8}}^{dN} \left( 1- t^{\beta_D -1} \right) e^{-2\pi i t \delta /d} dt \right| \\
\leq  O\left( \dfrac{dqN}{\phi(d)\phi(q)}(1 - \beta_D)\log(dqT) \exp\left( - \dfrac{c_{3}}{8} \dfrac{ \log N}{\log(dqT)} \right) \right) + O \left( (1 + N|\delta| )\dfrac{dqN\log^2 N}{T} \right).
\end{multline*}
By taking $q = 1$ and $\delta = 0$ in the estimates above, we can deduce that
\begin{equation}\label{FT at zero}
\left| \widehat{F_{N,d}} (0) -  \dfrac{1}{\phi(d)} \int_{N^{1/8}}^{dN} \left( 1- t^{\beta_D -1} \right) dt \right| \leq \dfrac{dN}{\phi(d)} \dfrac{(1 - \beta_D)\log (dT)}{4c_1}
+ O \left( \dfrac{dN\log^2 N}{T} \right),
\end{equation}
where we used the assumption $N \geq (DT)^{C_3}$ to obtain the first term on the right hand side; recall that $c_1$ is the constant involved in Definition~\ref{dichotomy}. Thus, by the triangle inequality and (\ref{Gauss sum principal character}), we can conclude that
\begin{multline*}
\left| \widehat{F_{N,d}} \left( \dfrac{a}{q} + \delta \right)\right| \leq \dfrac{ |\widehat{F_{N,d}}(0)| }{\phi(q)} + \dfrac{dN}{\phi(d)\phi(q)} \dfrac{(1 - \beta_D)\log (dT)}{4c_1} \\
+  O\left(\dfrac{dqN}{\phi(d)\phi(q)} (1 - \beta_D)\log(dqT) \exp\left( -\dfrac{c_{3}}{8} \dfrac{ \log x}{\log(dqT)} \right) \right) + O \left( (1 + N|\delta| )\dfrac{dqN\log^2 N}{T} \right).
\end{multline*}
Therefore, we can deduce the first inequality if we manage to prove the second one.

In order to prove the lower bound on $|\widehat{F_{N,d}}(0)|$, we need to bound the integral
$$\dfrac{1}{\phi(d)} \int_{N^{1/8}}^{dN} \left( 1- t^{\beta_D -1} \right) dt$$
from below. To do so we use the inequality $1 - e^{-x} \geq x/(x+1)$ which holds\footnote{This can be verified by taking second order derivatives of $e^x$ and $x+1$.} for all $x >0$. For $t \geq N^{1/8}$, one has $t \geq (DT)^{C_3/8} \geq (dqT)^{C_3/(8C_1)}$, and so
$$1 - t^{\beta_D - 1} \geq 1 - (dqT)^{-C_3(1-\beta_D )/ (8C_1)} \geq  \dfrac{C_3 (1 - \beta_D) \log (dqT)}{8C_1 + C_3(1-\beta_D) \log (dqT)}.$$
Since $1 - \beta_D \leq c_1 / (C_1 \log(DT)) \leq c_1 / \log (dq T)$ and $C_3$ is sufficiently large, we have
$$1 - t^{\beta_D - 1} \geq  \dfrac{C_3(1 - \beta_D) \log (dqT)}{8C_1 + C_3 c_1} \geq \dfrac{1 - \beta_D}{2c_1} \log(dqT).$$
Thus, 
\begin{equation}\label{lower bound - main contribution}
\dfrac{1}{\phi(d) } \int_{N^{1/8}}^{dN} \left( 1-  t^{\beta_D - 1} \right) dt \geq \dfrac{dN}{\phi(d)} \dfrac{(1- \beta_D)\log(dT)}{2c_1} - O(N^{1/8}).
\end{equation}
Therefore, by the triangle inequality in (\ref{FT at zero}),
\begin{multline*}
\left| \widehat{F_{N,d}} (0)  \right| \geq  \dfrac{1}{\phi(d)} \int_{N^{1/8}}^{dN} \left( 1- t^{\beta_D -1} \right) dt  - \dfrac{dN}{\phi(d)} \dfrac{(1 - \beta_D)\log (dT)}{4c_1}
- O \left( \dfrac{dN\log^2 N}{T} \right)\\
\geq  \dfrac{dN}{\phi(d)} \dfrac{(1- \beta_D)\log(dT)}{4c_1} - O \left( \dfrac{dN\log^2 N}{T} \right),
\end{multline*}
as claimed.
\end{proof}

\section{Proving the main result}\label{main argument section}
The main lemma used to prove Theorem~\ref{main result} is an analogue of the main iteration lemma given in Ruzsa and Sanders~\cite[Lemma 8.1]{RS08}. Under certain restrictions on several input parameters, the iteration lemma allows one to find a denser subset, located on a sub-progression, given a set whose difference set does not contain certain affine transformations of primes. 

One can then apply the iteration lemma and conclude that one of the hypotheses must fail after sufficiently many iteration steps, since otherwise the density increment would lead to a subset with density larger than $1$ which is impossible. The occurrence of the restriction hypotheses implies that either the difference set contains an element which is one less than a prime, or the desired upper bound on the density holds.

\begin{lemma}\label{iteration}
There exist positive absolute constants $C_4, C_5, c_5, c_6, c_7, c_8$ such that we can obtain the following result.\footnote{We need to introduce an upper bound on $N$ due to the factor $(\log N)^4$ in the minor arc estimate~\cite[Corollary 6.2]{RS08}.} 

Let $D \geq 2$ and let $N$ be a positive integer such that $D^{C_4} < N \leq \exp(D^{1/10})$. Let $T = D^{C_1^2}$. Let $A \subseteq [N]$ have density $\alpha >0$.

Let $d$ be a positive integer and assume one of the following:
\begin{enumerate}[(a)]
\item $(D,T)$ is unexceptional, $d\alpha^{-1} \leq c_{5} D^{c_{5}}$;
\item $(D,T)$ is exceptional, $d$ is a multiple of $d_D$, and $d\alpha^{-1} \leq c_{5} D^{1 + c_{5}}$.
\end{enumerate}
Suppose that $A - A$ does not contain any number which can be written as $(p-1)/d$ for some prime number $p$. Suppose also that
$$\log N \geq C_5 (\log \alpha^{-1}+ \log_3 D + 1) (\log D + \log_2 N + 1).$$
Then there exists a positive integer $d'$ with $d' \leq c_6 \alpha^{-3}$ and a progression $P'$ with common difference $d'$ and length $\geq (c_7 \alpha / d \log N)^8 N$ such that $|A \cap P'| \geq \alpha (1 + c_8) |P'|.$
\end{lemma}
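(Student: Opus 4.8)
The plan is to run a standard Hardy--Littlewood density increment, but feeding in the new major arc estimate (Proposition~\ref{major arc estimate}) to control the error terms well enough that the iteration survives $\log\alpha^{-1}$ many steps. First I would set up the weighted indicator: let $a = 1_A - \alpha 1_{[N]}$ be the balanced function and consider the count
$$\sum_{n,m} 1_A(n) 1_A(m) F_{N,d}(n-m) = \int_{\mb T} \widehat{1_A}(\theta)\overline{\widehat{1_A}(\theta)}\,\overline{\widehat{F_{N,d}}(\theta)}\,d\theta,$$
so that a nonzero value forces $dn - dm + 1$ to be prime (or a prime power, whose contribution is negligible and can be subtracted as in Ruzsa--Sanders), contradicting the hypothesis that $A-A$ avoids $(p-1)/d$. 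The idea is then the usual dichotomy: either $\widehat{1_A}$ is suitably equidistributed, in which case the main term $\alpha^2 |A|\,\widehat{F_{N,d}}(0)/N$ (coming from $\theta$ near $0$, after a Weyl-type/circle-method decomposition into $\mfr M^*_q$ for $q \le Q$ and a minor-arc remainder) dominates and we get a genuine count, contradicting the avoidance; or some $\widehat{1_A}(a/q+\delta)$ with $q \le Q$ is large, which by a standard argument produces a long arithmetic progression $P'$ of common difference $d' = q$ (or a bounded multiple) on which $A$ has density at least $\alpha(1+c_8)$.

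The key steps, in order, would be: (i) split $\mb T = \mfr M \cup \mfr m$ with $\mfr M = \bigcup_{q \le Q}\mfr M^*_q$ for a suitable $Q$ (something like $Q = (\alpha/\log N)^{-O(1)}$, consistent with the claimed $d' \le c_6\alpha^{-3}$ and length $\gtrsim (c_7\alpha/d\log N)^8 N$); (ii) on $\mfr m$, invoke the Ruzsa--Sanders minor arc estimate (their Corollary~6.2, source of the $(\log N)^4$ and hence the upper bound $N \le \exp(D^{1/10})$) together with Parseval to bound $\int_{\mfr m}|\widehat{1_A}|^2|\widehat{F_{N,d}}|$ by something like $\alpha |A|\, dN/\phi(d)$ times $(\log N)^{O(1)}/Q^{c}$, which is $o$ of the main term given the lower bound on $\log N$; (iii) on each $\mfr M_{a,q}$, write $\widehat{F_{N,d}}(a/q+\delta)$ using Proposition~\ref{major arc estimate}: the $2|\widehat{F_{N,d}}(0)|/\phi(q)$ term gives, after summing over $a$ coprime to $q$, the familiar $\sum_q c_q(\cdot)/\phi(q)$ local factor which, crucially because $d$ may be forced to share factors with relevant $q$'s only when $(d,q)=1$ fails (handled via \eqref{Gauss sum principal character}), is bounded and contributes an $\alpha^2$-type main term; the two $O$-error terms in Proposition~\ref{major arc estimate} are integrated against $|\widehat{1_A}|^2$ over $\mfr M_{a,q}$, using $|\delta| \le 1/(qQ)$ and $\int|\widehat{1_A}|^2 = |A|$, to give errors of size $\alpha|A|\,dN/\phi(d)$ times $\exp(-c_4\log N/\log(dqT)) \cdot (\text{poly})$ and $dNQ(\log N)^2/T \cdot \alpha|A|$ respectively; (iv) choose $T = D^{C_1^2}$ and use $\log N \gg (\log\alpha^{-1}+\log_3 D+1)(\log D + \log_2 N + 1)$ plus $N > D^{C_4}$ so that, in the unexceptional case, $\exp(-c_4\log N/\log(DT)) \le \exp(-c_4 C_4 \log D / (C_1^2\log D)) $ is smaller than any fixed power of $\alpha$, and $dNQ(\log N)^2/T$ is likewise negligible since $d\alpha^{-1} \le c_5 D^{c_5}$ and $Q$ is a power of $\alpha^{-1}\log N \le$ a power of $D$; in the exceptional case the extra factor $(1-\beta_D)\log(dT)$ in both the error and (via the lower bound on $\widehat{F_{N,d}}(0)$) the main term cancels, which is exactly the point of the exceptional-zero repulsion in Proposition~\ref{Explicit formula}, so the same conclusion holds once one allows $d\alpha^{-1} \le c_5 D^{1+c_5}$ to absorb the modulus $d_D \le D$ of the exceptional character; (v) conclude that if no large Fourier coefficient exists, the main term beats all errors and the prime-avoidance is contradicted, so some $|\widehat{1_A}(a/q+\delta)| \ge$ (a small multiple of $\alpha|A|$ times the local density), whence a routine pigeonhole on the arithmetic progression $\{n : n \equiv r \pmod q,\ |n\theta_0|\text{ small}\}$ (paralleling Ruzsa--Sanders Lemma~8.1) yields $P'$ with $d' = q \le c_6\alpha^{-3}$, $|P'| \ge (c_7\alpha/d\log N)^8 N$ and $|A\cap P'| \ge \alpha(1+c_8)|P'|$.

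The main obstacle I anticipate is the bookkeeping in step (iv): one must choose the parameters $Q$, $T=D^{C_1^2}$, and the constants $C_4, C_5$ so that \emph{simultaneously} the minor-arc loss $Q^{-c}(\log N)^{O(1)}$, the major-arc density-of-zeros error $\exp(-c_4\log N/\log(DT))$, and the explicit-formula truncation error $dNQ(\log N)^2/T$ are each $\le \tfrac12 c_8 \alpha$ (so that the density increment genuinely improves), while keeping $Q$ large enough that the local main term is a definite fraction of $\alpha^2$ and small enough that $d' \le c_6\alpha^{-3}$ and the progression length $\ge (c_7\alpha/d\log N)^8 N$ come out with the stated exponents. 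The delicate interplay is that $\log(dqT)$ appears in the exponent with $dq$ possibly as large as $D^{C_1}$ in the exceptional case, so one needs $\log N$ genuinely of size a large multiple of $(\log\alpha^{-1})(\log D + \log_2 N)$ — this is precisely the hypothesis imposed — and one must verify the exceptional-case cancellation $(1-\beta_D)\log(dT)$ between the lower bound on $|\widehat{F_{N,d}}(0)|$ and the leading error term goes through uniformly. A secondary but routine obstacle is handling prime powers and the boundary terms $N^{1/8}$ from Lemma~\ref{estimate}, both of which are $o(N/\log N)$ and are absorbed harmlessly.
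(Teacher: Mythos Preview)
Your proposal has the right ingredients --- the circle method decomposition, the minor-arc bound from Ruzsa--Sanders, Proposition~\ref{major arc estimate} as the key major-arc input, and the cancellation of the factor $(1-\beta_D)\log(dT)$ between the main term and the leading error in the exceptional case --- and the ``main obstacle'' you identify (the parameter bookkeeping in step~(iv)) is indeed the technical heart.

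However, the paper does \emph{not} run the single-large-Fourier-coefficient dichotomy you sketch in step~(v). Instead it follows Ruzsa--Sanders and uses \emph{energy increment}: one shows directly (their equation~(\ref{total energy - major arcs})) that
\[
\int_{\mfr M}\bigl|\widehat{1_A}-\alpha\widehat{1_I}\bigr|^2\,|\widehat{F_{N',d}}|\;\gg\;\alpha^2 N\,|\widehat{F_{N',d}}(0)|,
\]
then proves a separate $L^2$-concentration lemma (Lemma~\ref{L^2 concentration}) establishing that this energy already lies on the sub-range $q\le C_6\alpha^{-3}$ and that $\sup_{\mfr M_q^*}|\widehat{F_{N',d}}|\ll|\widehat{F_{N',d}}(0)|/\phi(q)$ there, and finally invokes Ruzsa--Sanders Corollary~7.3 as a black box to convert this into the progression $P'$. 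Your direct-count approach would also go through, but it needs an explicit two-tier splitting of the major arcs that you do not spell out: the minor-arc estimate forces the major-arc threshold $Q'$ to be as large as $d^4(\log N)^8\alpha^{-2}$ (see~(\ref{parameters})), which is much bigger than $\alpha^{-3}$, so a large Fourier coefficient could a~priori sit at any $q\le Q'$. To force $d'=q\le c_6\alpha^{-3}$ you must first use Proposition~\ref{major arc estimate} again on the intermediate range $C_6\alpha^{-3}<q\le Q'$ to show $|\widehat{F_{N',d}}|\le C_6^{-1}\alpha^3|\widehat{F_{N',d}}(0)|$ there, whence Plancherel kills that contribution --- this is exactly the content of~(\ref{sub-critical range})--(\ref{energy - sub-critical range}) in the paper's Lemma~\ref{L^2 concentration}. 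Your step~(i) conflates the minor-arc threshold $Q'$ with the eventual bound on $d'$, and your step~(v) assumes the large coefficient automatically lands at small $q$; both points are handled by this intermediate-range argument, which you should make explicit. Also note the paper works with $F_{N',d}$ for $N'=\lfloor c_9\alpha N\rfloor$ rather than $F_{N,d}$, which is what makes the $\theta\approx 0$ main term genuinely of size $\alpha^2 N|\widehat{F_{N',d}}(0)|$ without edge losses.
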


We follow a similar strategy to prove Lemma~\ref{iteration} as Ruzsa and Sanders~\cite[Section 8]{RS08}. The idea used to obtain density increment there is energy increment. To proceed, we first notice that by the same argument as theirs, we can conclude that there exist absolute constants $c_9, c_{10}$ such that for any $N, D, d, T, A, \alpha$ satisfying the same hypotheses as Lemma~\ref{iteration}, we can deduce the following. By choosing
\begin{equation}\label{parameters}
N' :=  \lfloor c_9 \alpha N \rfloor, \; Q' := \dfrac{d^4 \log^8 N'}{c_{10}^2 \alpha^2} \;\text{and} \; Q := \dfrac{N'}{Q'}
\end{equation}
and taking the major arcs to be
$$\mfr{M} := \bigcup_{q \leq Q'} \mfr{M}^*_q,$$
where $\mfr{M}_q^*$ and $\mfr{M}_{a,q}$ (involved in the definition of $\mfr{M}_q^*$) are defined with respect to $Q$ as in (\ref{major arcs notation}), we have
\begin{equation}\label{lower bound at 0}
|\widehat{F_{N',d}} (0) | \gg \dfrac{N'}{d}
\end{equation}
and 
\begin{equation}\label{total energy - major arcs}
\int_{ \mfr{M}} \left| \left(\widehat{1_A} - \alpha \widehat{1_I} \right)  (\theta) \right|^2 |\widehat{F_{N', d}} (\theta) | d\theta \gg \alpha^2 N  |\widehat{F_{N',d}}(0)|.
\end{equation}

The inequality (\ref{lower bound at 0}) is a consequence of Proposition~\ref{major arc estimate} and an upper bound on the size of the exceptional zero (see Iwaniec and Kowalski~\cite[Theorem 5.28]{IK04}). There are two steps towards obtaining (\ref{total energy - major arcs}): the first is to show that 
$$\int_{\mb{T}}  \left| \left(\widehat{1_A} - \alpha \widehat{1_I} \right) (\theta) \right|^2 |\widehat{F_{N', d}} (\theta) | d\theta \gg \alpha^2 N |\widehat{F_{N',d}} (0)|,$$
and the second is to use the minor arc estimate (see Ruzsa and Sanders~\cite[Section 6]{RS08}) to bound the integral on the minor arcs $\mfr{m} := \mb{T} \setminus \mfr{M}$.

Thus, to prove Lemma~\ref{iteration}, it suffices to obtain estimates needed for the Ruzsa--Sanders method~\cite[Corollary 7.3, Section 8]{RS08} on the major arcs, which are given in Lemma~\ref{L^2 concentration} below.

\begin{lemma}\label{L^2 concentration}
There exists a positive absolute constant $C_6$ such that for any $N, D, d, T, A, \alpha$ satisfying the same hypotheses as Lemma~\ref{iteration} and $Q, N'$ as defined in (\ref{parameters}), we have
$$\sup_{\theta \in \mfr{M}^*_q} |\widehat{F_{N', d}} (\theta) |  \ll \dfrac{{|\widehat{F_{N', d}} (0)|}}{\phi(q)} \text{ for all } q \leq C_6 \alpha^{-3},$$
and
$$\sum_{q \leq C_6 \alpha^{-3}} \int_{\mfr{M}^*_q}  \left| \left(\widehat{1_A} - \alpha \widehat{1_I} \right)  (\theta) \right|^2 |\widehat{F_{N', d}} (\theta) | d\theta \gg \alpha^2 N |\widehat{F_{N',d}} (0)|.$$
\end{lemma}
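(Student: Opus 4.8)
The plan is to derive Lemma~\ref{L^2 concentration} from Proposition~\ref{major arc estimate} together with the ``energy on major arcs'' bound (\ref{total energy - major arcs}), by checking that the parameters in (\ref{parameters}) put us within the hypotheses of the major arc estimates. First I would set up the dictionary between the two sets of parameters: the problem runs on $N'=\lfloor c_9\alpha N\rfloor$ (not $N$), with $Q=N'/Q'$ and $Q'=d^4\log^8 N'/(c_{10}^2\alpha^2)$, and the major arcs $\mfr{M}_q^*$ are defined relative to this $Q$. The key smallness input is that on $\mfr{M}_{a,q}$ with $q\le Q'$, a point $\theta=a/q+\delta$ has $|\delta|\le 1/(qQ)\le 1/Q=Q'/N'$, so $N'|\delta|\le Q'\ll d^4\alpha^{-2}\log^8 N'$; combined with $T=D^{C_1^2}$ and $N>D^{C_4}$ one gets $N'|\delta|\,dqN'(\log N')^2/T$ comfortably small (a power of $D$ in the numerator against $D^{C_1^2}$ in the denominator once $C_4$ is large), so the second $O$-term in Proposition~\ref{major arc estimate} is negligible compared with $|\widehat{F_{N',d}}(0)|/\phi(q)$.

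Next I would handle the main exponential-decay error term. Applying Proposition~\ref{major arc estimate} (case (1) or (2) according to whether $(D,T)$ is exceptional; in case (2) the hypothesis ``$d$ a multiple of $d_D$'' and $d\alpha^{-1}\le c_5 D^{1+c_5}$ guarantee $dq\le D^{C_1}$ for $q\le C_6\alpha^{-3}$, using $\alpha^{-3}\le$ a small power of $D$ which follows from $\log N\ge C_5(\log\alpha^{-1}+\cdots)(\log D+\cdots)$ and $N\le\exp(D^{1/10})$), we need
$$\frac{dN'q}{\phi(d)\phi(q)}\exp\!\left(-c_4\frac{\log N'}{\log(dqT)}\right)\ll \frac{|\widehat{F_{N',d}}(0)|}{\phi(q)}\asymp \frac{N'}{d\,\phi(q)}$$
in the unexceptional case (and the analogous inequality with the $(1-\beta_D)\log(dqT)$ factor on both sides in the exceptional case, where (\ref{lower bound at 0}) / the lower bound in Proposition~\ref{major arc estimate}(2) supplies the matching factor). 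Here I would use $q/\phi(q)\ll\log\log q\ll\log\log(\alpha^{-1})$, $d/\phi(d)\cdot d=d^2/\phi(d)\ll d^2$, and then observe that $\log(dqT)\le C(\log D+\log\alpha^{-1})$ while $\log N'\ge c\log N$, so the stated hypothesis $\log N\ge C_5(\log\alpha^{-1}+\log_3 D+1)(\log D+\log_2 N+1)$ forces $\exp(-c_4\log N'/\log(dqT))$ to beat any fixed power of $d\alpha^{-1}$ (hence of $D$), absorbing the polynomial factors. This gives the pointwise bound $\sup_{\theta\in\mfr{M}_q^*}|\widehat{F_{N',d}}(\theta)|\ll|\widehat{F_{N',d}}(0)|/\phi(q)$ for all $q\le C_6\alpha^{-3}$, which is the first displayed inequality.

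For the second displayed inequality I would argue that the major arcs $\mfr{M}=\bigcup_{q\le Q'}\mfr{M}_q^*$ contribute essentially all of the energy already captured by (\ref{total energy - major arcs}), and that the tail $Q'\ge q>C_6\alpha^{-3}$ contributes little. Concretely, for $q>C_6\alpha^{-3}$ the pointwise bound (still valid from Proposition~\ref{major arc estimate}, since $q\le Q'$ is a power of $D$ and the error analysis above did not really need $q\le\alpha^{-3}$, only $dq\le D$ resp.\ $D^{C_1}$ — so one fixes $C_6$ and $c_5$ so that this holds up to $q=Q'$) gives $|\widehat{F_{N',d}}(\theta)|\ll|\widehat{F_{N',d}}(0)|/\phi(q)$ on $\mfr{M}_q^*$; then
$$\sum_{q>C_6\alpha^{-3}}\int_{\mfr{M}_q^*}\!\bigl|(\widehat{1_A}-\alpha\widehat{1_I})(\theta)\bigr|^2|\widehat{F_{N',d}}(\theta)|\,d\theta\ \ll\ |\widehat{F_{N',d}}(0)|\sum_{q>C_6\alpha^{-3}}\frac{1}{\phi(q)}\int_{\mfr{M}_q^*}\!\bigl|(\widehat{1_A}-\alpha\widehat{1_I})(\theta)\bigr|^2 d\theta.$$
Bounding $\int_{\mfr{M}_q^*}|\cdots|^2\le$ (measure of $\mfr{M}_q^*$)$\cdot$(sup), or more efficiently using that the $\mfr{M}_{a,q}$ for distinct reduced fractions of bounded denominator are disjoint and applying a large-sieve / Plancherel estimate $\int_{\mb{T}}|(\widehat{1_A}-\alpha\widehat{1_I})|^2\ll\alpha N$, one sees the tail sum carries a factor $\sum_{q>C_6\alpha^{-3}}\phi(q)/(q\,(\text{something}))$ or directly a factor $\alpha^{3}$-worth of saving, so choosing $C_6$ large makes it at most half of the right-hand side of (\ref{total energy - major arcs}). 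Subtracting, $\sum_{q\le C_6\alpha^{-3}}\int_{\mfr{M}_q^*}\gg\alpha^2N|\widehat{F_{N',d}}(0)|$, as required.

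The main obstacle I expect is the bookkeeping in the tail estimate: one must choose the threshold $C_6\alpha^{-3}$ and the constants $c_5$ so that (i) for $q\le Q'$ the hypotheses $dq\le D$ (unexceptional) or $dq\le D^{C_1}$ (exceptional) of Proposition~\ref{major arc estimate} hold — this is exactly what the two alternatives (a),(b) on $d\alpha^{-1}$ are engineered for, since $Q'$ is a power of $D/\alpha$ and one needs the extra factor $D$ in alternative (b) to stay under $D^{C_1}$ — and (ii) the sum $\sum_{q>C_6\alpha^{-3}}$ of the relevant weights genuinely beats the constant in (\ref{total energy - major arcs}). Getting a clean $L^2$ bound on $\widehat{1_A}-\alpha\widehat{1_I}$ localized to $\mfr{M}_q^*$ (rather than on all of $\mb{T}$) with the right $q$-dependence, so that the tail sum converges with room to spare, is the delicate point; this is precisely where the Ruzsa--Sanders argument in \cite[Corollary 7.3, Section 8]{RS08} is invoked, and I would follow their treatment, substituting our stronger Proposition~\ref{major arc estimate} for their Proposition~4.7 wherever the major arc input is used.
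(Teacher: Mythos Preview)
Your proposal follows essentially the same route as the paper, and the first assertion is handled exactly as the paper does. The one place where you overcomplicate matters is the tail estimate for the second assertion. Rather than keeping the $q$-dependent bound $|\widehat{F_{N',d}}(0)|/\phi(q)$ on each $\mfr{M}_q^*$ with $q>C_6\alpha^{-3}$ and then trying to sum via a localized $L^2$ or large-sieve inequality (which, as you note, is where convergence becomes delicate), the paper simply observes that on all of $\mfr{M}_2=\bigcup_{C_6\alpha^{-3}<q\le Q'}\mfr{M}_q^*$ one has the \emph{uniform} sup bound
\[
|\widehat{F_{N',d}}(\theta)|\ \ll\ C_6^{-1}\alpha\,|\widehat{F_{N',d}}(0)|,
\]
since for such $q$ both the main term $2|\widehat{F_{N',d}}(0)|/\phi(q)$ and the exponential-decay error in Proposition~\ref{major arc estimate} are $O(\alpha^3)|\widehat{F_{N',d}}(0)|$ once $C_5$ is large. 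Pulling this uniform bound out and applying Plancherel once over $\mb{T}$ (so $\int_{\mb{T}}|\widehat{1_A}-\alpha\widehat{1_I}|^2\ll\alpha N$) gives the tail $\ll C_6^{-1}\alpha^2 N|\widehat{F_{N',d}}(0)|$ immediately; subtracting from (\ref{total energy - major arcs}) finishes. So the step you flag as ``the delicate point'' is in fact routine, and no appeal to the localized Ruzsa--Sanders $L^2$ machinery is needed.
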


\begin{proof}
By Dirichlet's pigeonhole principle, we have
$$\mfr{M} = \mfr{M}_1 \cup \mfr{M}_2,$$ 
where
$$\mfr{M}_1 := \bigcup_{q \leq C_6 \alpha^{-3}} \mfr{M}^*_q \text{ and } \mfr{M}_2 := \bigcup_{C_6 \alpha^{-3} < q \leq Q'} \mfr{M}^*_q.$$
We can employ our major arc estimates on both $\mfr{M}_1$ and $\mfr{M}_2$, since the relevant hypotheses required by Proposition~\ref{major arc estimate} are satisfied as long as $c_5$ is sufficiently small and $C_5$ is large. Since $c_5 < 1$ and $C_1 \geq 10$, it follows that for all $dq \leq D^{C_1}$ and $a/q + \delta \in \mfr{M}_q^*$, one has
$$(1 + N'|\delta| )\dfrac{dqN'\log^2 N'}{T} \ll \dfrac{d^5 q N' \log^{10} N'}{\alpha^2 D^{C_1^2}} \ll \dfrac{N'}{D^{2C_1}}.$$
Thus, irrespective of whether $(D,T)$ is exceptional or not, for any $a/q + \delta \in \mfr{M}_q^*$ where $q \leq Q'$ one has
\begin{equation}\label{major arc - 2}
\left| \widehat{F_{N',d}} \left( \dfrac{a}{q} + \delta \right) \right| \leq \dfrac{2 |\widehat{F_{N',d}} (0)| }{\phi(q)}+ 
O\left( \dfrac{| \widehat{F_{N',d}}(0)|q\log(dqT)}{\phi(q)\log(dT)} \exp\left( - \dfrac{c_4 \log N'}{\log(D^{C_1} T)} \right) \right) \\
+ O \left( \dfrac{N'}{D^{2C_1}} \right).
\end{equation}
By (\ref{lower bound at 0}) and $dq \leq D^{C_1}$, we always have
\begin{equation}\label{tiny error}
\dfrac{2 |\widehat{F_{N',d}} (0)| }{\phi(q)}+  O \left( \dfrac{N'}{D^{2C_1}} \right) \geq \dfrac{|\widehat{F_{N',d}} (0)| }{\phi(q)}.
\end{equation}
To deal with the second term, notice that since $dq \leq D^{C_1}$, $T = D^{C_1^2}$, and $C_5$ is sufficiently large, we have
\begin{equation}\label{bounding the second term}
\dfrac{|\widehat{F_{N',d}}(0)| q\log(dqT)}{\phi(q)\log(dT)} \exp\left(- c_4 \dfrac{\log N'}{\log (D^{C_1} T)} \right) \leq C_6^{-1} \alpha^3 |\widehat{F_{N',d}}(0)|.
\end{equation}
Since $\alpha^{3} \leq \alpha $, it follows from (\ref{major arc - 2}), (\ref{tiny error}) and (\ref{bounding the second term}) that for all $C_6 \alpha^{-3} < q \leq Q'$, we have
\begin{equation}\label{sub-critical range}
\sup_{\theta \in \mfr{M}^*_{q} } |\widehat{F_{N', d}} (\theta) | \leq C_6^{-1} \alpha^3 {|\widehat{F_{N', d}} (0)|} \ll C_6^{-1} \alpha{|\widehat{F_{N', d}} (0)|}.
\end{equation}
Since $\alpha^3 \ll \min_{q \leq  C_6 \alpha^{-3}}\{ 1/ \phi(q) \},$ it follows from (\ref{major arc - 2}), (\ref{tiny error}) and (\ref{bounding the second term}) that
\begin{equation}\label{critical range}
\sup_{\theta \in \mfr{M}^*_q} |\widehat{F_{N', d}} (\theta) |  \ll \dfrac{{|\widehat{F_{N', d}} (0)|}}{\phi(q)} \text{ for all } q \leq C_6 \alpha^{-3},
\end{equation}
which is the first assertion.

By substituting (\ref{sub-critical range}) and applying Plancherel's theorem, we have
\begin{equation}\label{energy - sub-critical range}
\int_{\mfr{M}_2 } \left| \left(\widehat{1_A} - \alpha \widehat{1_I} \right) (\theta) \right|^2 |\widehat{F_{N', d}} (\theta)| d\theta \ll C_6^{-1} \alpha^2 N |\widehat{F_{N', d}} (0) |.
\end{equation}
Therefore, combining the lower bound obtained in (\ref{total energy - major arcs}) and the upper bound (\ref{energy - sub-critical range}), there exists a large absolute constant $C_6$ so that
\begin{equation}\label{critical range - energy}
 \int_{\mfr{M}_1} \left| \left(\widehat{1_A} - \alpha \widehat{1_I} \right)  (\theta) \right|^2 |\widehat{F_{N', d}} (\theta)| d\theta \gg \alpha^2 N |\widehat{F_{N', d}} (0) |.
 \end{equation}
The second assertion follows from (\ref{critical range - energy}) and the triangle inequality.

\end{proof}

We note that our main modification is the size bound on $N$ introduced in the assumption of the iteration lemma. More specifically, we obtain the same density increment under the weaker condition $\log N \gg (\log \alpha^{-1}+ \log_3 D +1) (\log D + \log_2 N + 1)$, which is $\log N \gg (\log D)^2$ for Ruzsa and Sanders. This strengthening leads to our improvement.

\begin{proof}[Proof of Theorem~\ref{main result}]
Let $C'$ be a sufficiently large constant and let 
$$D := \exp\left( \dfrac{\log N}{C' (\log \alpha^{-1} + \log_2 N +1)} \right) \text{  and  }T := D^{C_1^2}.$$
The result follows from applying the same argument as Ruzsa and Sanders~\cite[Proof of Theorem 1.1]{RS08} with $D_0 = D$ and $D_1 = D^{C_1}.$
\end{proof}

\section*{acknowledgement}
The author would like to thank Tom Sanders for his supervision, James Maynard and Joni Ter\"av\"ainen for discussions, and an anonymous referee for suggestions.


\begin{thebibliography}{9}

\bibitem{IK04} H. Iwaniec and E. Kowalski, \emph{Analytic number theory}, Colloquium Publications, \textbf{53}. American Mathematical Society, Providence, RI, 2004.

\bibitem{Luc08} J. Lucier, \emph{Difference sets and shifted primes}, Acta. Math. Hungar. \textbf{120} (2008), 79--102.

\bibitem{PSS88} J. Pintz, W. L. Steiger, and E. Szemer\'edi, \emph{On sets of natural numbers whose difference set contains no squares}. J. London Math. Soc. (2) \textbf{37} (1988), 219--231.

\bibitem{RS08} I. Z. Ruzsa and T. Sanders, \emph{Difference sets and the primes}, Acta Arith. \textbf{131} (2008), 281--301.

\bibitem{Sar78a} A. S\'ark\"ozy, \emph{On difference sets of sequences of integers. I}, Acta Math. Acad. Sci.  Hungar. \textbf{31} (1978), 125--149.

\bibitem{Sar78b} A. S\'ark\"ozy, \emph{On difference sets of sequences of integers. II},  Ann. Univ. Sci. Budapest. E\"otv\"os Sect. Math. \textbf{21} (1978), 45--53 (1979).

\bibitem{Sar78} A. S\'ark\"ozy, \emph{On difference sets of sequences of integers. III}, Acta Math. Acad. Sci. Hungar. \textbf{31} (1978), 355--386.

\end{thebibliography}
\end{document}